\newcommand{\inv}{^{\raisebox{.2ex}{$\scriptscriptstyle-1$}}}   
\newtheorem{theorem}{Theorem}[section]
\newtheorem{lemma}[theorem]{Lemma}
\newtheorem{corollary}[theorem]{Corollary}
\newtheorem{proposition}[theorem]{Proposition}
\theoremstyle{definition}
\theoremstyle{definition}
\begin{document}

\title[Primitive Hyperideals and Hyperstructure spaces ]{Primitive Hyperideals and Hyperstructure spaces of hyperrings}

\author[Bijan Davvaz, Amartya Goswami, Karin-Therese Howell]{Bijan Davvaz$^1$, Amartya Goswami$^{2,*}$, and Karin-Therese Howell$^{3,*}$}

\address{$^{1}$Department
of Mathematical Sciences, Yazd University, Yazd, Iran.}

\address{$^{2}$Department of Mathematics and Applied Mathematics, University of Johannesburg, Auckland Park, 2006, South Africa.}

\address{$^{3}$Department of Mathematical Sciences, Stellenbosch University, Stellenbosch 7600, South Africa.}
\address{
$^{*}$National Institute for Theoretical and Computational Sciences (NITheCS), South Africa.}

\email{davvaz@yazd.ac.ir}
\email{agoswami@uj.ac.za}
\email{kthowell@sun.ac.za}

\subjclass[2010]{16Y99, 13E05,16D60.}

\keywords{hyperring, hypermodule, primitive hyperideal, Jacobson topology, generic point, Noetherian space.}

\begin{abstract}
We introduce primitive hyperideals of a hyperring $R$ and show relations with $R$ itself, and with maximal and prime hyperideals of $R$. We endow a Jacobson topology on the set of primitive hyperideals of $R$ and study topological properties of the corresponding hyperstructure space.
\end{abstract}

\maketitle
\section{Introduction}

The notion of \emph{multi-valued} algebraic structures were first considered in \cite{M34}, where \emph{hypergroups} have been introduced, which is a generalization over groups by allowing the binary operation to be multi-valued. Later, in \cite{K47}, the concept of \emph{hyperrings} have been introduced. Since its first appearance, (mostly commutative) hyperrings have been intensively studied both in algebraic as well as in geometric contexts. In  \cite{DavLeo} (see also \cite{DS06}), a comprehensive account on various algebraic properties of hyperrings (as well as their generalizations) can be found. For applications of hyperrings in geometry, we refer to \cite{CC10, CC11,CL-F03,J18}. As far as noncommutative hyperrings are concerned not much structural study has been done so far.

It is well known (see \cite{J45} and \cite{J56} ) that for noncommutative rings, the notion of primitive ideals play a crucial role in determining its structure theory. Furthermore, in \cite{J56}, a hull-kernel-type topology has been endowed on the set of all primitive ideals of a ring, and representations of biregular rings were studied.  It is also worth it to mention that primitive ideals have shown their immense importance in understanding structural aspects of modules \cite{J56, R88}, Lie algebras \cite{KPP12}, enveloping algebras \cite{D96,J83}, PI-algebras \cite{J75}, quantum groups \cite{J95}, skew polynomial rings \cite{I79}, and others.

The aim of this paper is to  introduce primitive hyperideals of a (Krasner) hyperring and studying some of their properties. We show implications between prime, maximal, and primitive hyperideals of a hyperring. We also characterize simple hypermodules. Similar to \cite{J56}, we impose Jacobson topology on the set of primitive hyperideals of a hyperring and investigate topological properties of the corresponding hyperstructure space. We characterize irreducible closed subsets of a hyperstructure space and prove every irreducible closed subset of a hyperstructure  space has a unique generic point. We give sufficient condition for the space to be Noetherian. We study continuous maps between such spaces.

\section{Preliminaries} 

Suppose $R$ is a nonempty set and $\mathcal{P}^*(R)$ is the set of all nonempty subsets of $R$. 
A  \emph{Krasner hyperring} is a system $(R,+,\cdot,-,0)$ such that

(I) $(R,+,0)$ is a \emph{canonical hypergroup}, that is, $+\colon R\times R\to \mathcal{P}^*(R)$ is a hyperoperation on $A$ satisfying the following properties for all $a,b,c \in R$:
\begin{enumerate}[\upshape(i)]
	
\item $a+b=b+a;$
	
\item $a+(b+c)=(a+b)+c$;
	
\item  there exists $0\in A$ such that $a+0=\lbrace a\rbrace$;
	
\item  for every $a$, there exists a unique $-a\in A$ such that $0\in a-a;$
	
\item if $a\in b+c$, then $c\in -b+a$ and $a \in c-b$,
\end{enumerate}

(II) $(R, \cdot)$ is a semigroup,

(III) $a \cdot 0 = 0 \cdot a = 0,$ and

(IV) $a\cdot (b+c)=  a\cdot b + a \cdot c,$

(V) $(a+b)\cdot c=a\cdot c + b\cdot c,$

for all $a,b,c \in R.$

A hyperring $R$ is called \emph{unital} if $R$ has a multiplicative identity, that is, there exists $1\in R$ such that $a\cdot 1=a=1\cdot a$ for all $a\in R$. For simplicity, we shall write $a\cdot b$ as $ab$.
We will restrict our focus to Krasner hyperrings in this paper, so if we refer to a hyperring, it will be a Krasner hyperring. 

A nonempty subset $S$ of a hyperring $R$ is said to be a \emph{subhyperring} of $R$ if $(S,+,\cdot)$ is itself a hyperring. A subhypergroup $\mathfrak{a}$ of a hyperring $R$ is called a \emph{left (right) hyperideal} of $R$ if $r\cdot a \in \mathfrak{a} \,(a\cdot r \in \mathfrak{a})$ for all $r\in R, a \in \mathfrak{a}.$  If $\mathfrak{a}$ is both a left and right hyperideal then $\mathfrak{a}$ is called a \emph{two-sided hyperideal} or simply a \emph{hyperideal}. Unless otherwise stated, we assume all hyperideals are two-sided. If $\mathfrak{a}$ is a hyperideal of $R$, then we can form the \emph{quotient} hyperring $R/\mathfrak{a} = \{ \mathfrak{a}+r\mid r \in R\}$ with the following two operations:
\begin{align*}
(\mathfrak{a}+r_1) + (\mathfrak{a}+r_2) &= \{\mathfrak{a}+r \mid r \in r_1 + r_2\};\\
(\mathfrak{a}+r_1)( \mathfrak{a}+r_2) &= \mathfrak{a}+r_1r_2.
\end{align*}
The following result is known, but the proof is included for completeness.

\begin{proposition}
If $\{\mathfrak{a}_{\lambda}\}_{\lambda \in \Lambda}$ is a nonempty family of hyperideals of a hyperring $R,$ then the following are also hyperideals of $R$.
\begin{enumerate}[\upshape(i)] 
\item $\bigcap_{\lambda \in \Lambda}\mathfrak{a}_{\lambda}$
		
\item $\sum_{\lambda \in \Lambda}\mathfrak{a}_{\lambda} = \{x \mid x \in \sum_{\lambda \in \Lambda}a_{\lambda}, a_{\lambda} \in \mathfrak{a}_{\lambda}\}$
\end{enumerate}
\end{proposition}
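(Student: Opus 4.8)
The plan is to verify, in each case, the three defining features of a (two-sided) hyperideal: that the set is a canonical subhypergroup of $(R,+,0)$ --- i.e.\ it is nonempty, closed under the hyperaddition, and closed under the additive inverse --- and that it absorbs products by $R$ on both sides. Throughout I would use that each $\mathfrak{a}_\lambda$ already satisfies these, together with the canonical-hypergroup identity $-(x+y)=(-x)+(-y)$ and the associativity and commutativity of $+$ recorded in (I).

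For (i), the intersection, every step is immediate. Since $0\in\mathfrak{a}_\lambda$ for all $\lambda$, we have $0\in\bigcap_{\lambda}\mathfrak{a}_\lambda$, so it is nonempty. If $a,b\in\bigcap_{\lambda}\mathfrak{a}_\lambda$, then $a,b\in\mathfrak{a}_\lambda$ for every $\lambda$, whence $a+b\subseteq\mathfrak{a}_\lambda$ for every $\lambda$ and therefore $a+b\subseteq\bigcap_{\lambda}\mathfrak{a}_\lambda$; the same pointwise argument gives $-a\in\bigcap_{\lambda}\mathfrak{a}_\lambda$ and, for any $r\in R$, both $ra\in\bigcap_{\lambda}\mathfrak{a}_\lambda$ and $ar\in\bigcap_{\lambda}\mathfrak{a}_\lambda$. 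Hence the intersection is a hyperideal.

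For (ii), I read $\sum_{\lambda}\mathfrak{a}_\lambda$ as the set of all $x$ lying in some finite hypersum $\sum_{\lambda}a_\lambda$ with $a_\lambda\in\mathfrak{a}_\lambda$ and almost all $a_\lambda=0$. Taking every $a_\lambda=0$ shows $0$ lies in the sum. For closure under hyperaddition, take $x\in\sum_{\lambda}a_\lambda$ and $y\in\sum_{\lambda}b_\lambda$; then $x+y\subseteq(\sum_{\lambda}a_\lambda)+(\sum_{\lambda}b_\lambda)$, and by commutativity and associativity of $+$ this regroups as $\sum_{\lambda}(a_\lambda+b_\lambda)$. Since $a_\lambda+b_\lambda\subseteq\mathfrak{a}_\lambda$, every element of $x+y$ lies in a hypersum $\sum_{\lambda}c_\lambda$ with $c_\lambda\in\mathfrak{a}_\lambda$, i.e.\ lies in $\sum_{\lambda}\mathfrak{a}_\lambda$; thus $x+y\subseteq\sum_{\lambda}\mathfrak{a}_\lambda$. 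Closure under inverses follows from $-x\in-(\sum_{\lambda}a_\lambda)=\sum_{\lambda}(-a_\lambda)$ together with $-a_\lambda\in\mathfrak{a}_\lambda$. Finally, for absorption I would first extend the distributive laws (IV) and (V) from two summands to an arbitrary finite hypersum by induction; then for $r\in R$ and $x\in\sum_{\lambda}a_\lambda$ we obtain $rx\in r\cdot\sum_{\lambda}a_\lambda=\sum_{\lambda}ra_\lambda\subseteq\sum_{\lambda}\mathfrak{a}_\lambda$ (using $ra_\lambda\in\mathfrak{a}_\lambda$), and symmetrically $xr\in\sum_{\lambda}\mathfrak{a}_\lambda$.

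The routine part is the intersection; the genuine care is needed in (ii), precisely because $+$ is multivalued. The step I expect to be the main obstacle is the regrouping of $(\sum_{\lambda}a_\lambda)+(\sum_{\lambda}b_\lambda)$ into $\sum_{\lambda}(a_\lambda+b_\lambda)$ and the accompanying bookkeeping: one must argue that an element chosen from a hypersum-of-hypersums can be realized as an element of a single hypersum $\sum_{\lambda}c_\lambda$ with $c_\lambda\in\mathfrak{a}_\lambda$, which rests on the associativity and commutativity of the canonical hypergroup, and likewise that distributivity survives passage to finite hypersums. Once these identities are in hand, every containment closes.
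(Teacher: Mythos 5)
Your proposal is correct and follows essentially the same route as the paper: a pointwise check for the intersection, and for the sum the regrouping of $(\sum_\lambda a_\lambda)+(\sum_\lambda b_\lambda)$ into $\sum_\lambda(a_\lambda+b_\lambda)$ together with termwise absorption $ra_\lambda, a_\lambda r\in\mathfrak{a}_\lambda$. If anything, you are more explicit than the paper about the points that need care --- the finite-support convention for the hypersum, the associativity/commutativity justification for the regrouping, and the inductive extension of distributivity --- all of which the paper uses silently.
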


\begin{proof}
(i) Suppose that $x,y \in \bigcap_{\lambda \in \Lambda}\mathfrak{a}_{\lambda}.$ Then $x,y \in \mathfrak{a}_{\lambda}$ for all $\lambda \in \Lambda.$ Since each $\mathfrak{a}_{\lambda}$ is a hyperideal, it follows that $x-y \in \mathfrak{a}_{\lambda}$ for all $\lambda \in \Lambda.$ This implies that $x -y \subseteq \bigcap_{\lambda \in \Lambda}\mathfrak{a}_{\lambda}.$
Now let $r \in R.$ For each $\lambda \in \Lambda,$ since $\mathfrak{a}_{\lambda}$ is a hyperideal of $R,$ it follows that $rx \in \mathfrak{a}_{\lambda}$ and $xr \in \mathfrak{a}_{\lambda},$ and hence we conclude that $rx \in \bigcap_{\lambda \in \Lambda}\mathfrak{a}_{\lambda}$ and $xr \in \bigcap_{\lambda \in \Lambda}\mathfrak{a}_{\lambda}.$ 
	
(ii) Suppose that $x,y \in \sum_{\lambda \in \Lambda}\mathfrak{a}_{\lambda}.$ Then $x \in \sum_{\lambda\in \Lambda}{a}_{\lambda}$ for some $a_{\lambda} \in \mathfrak{a}_{\lambda}$ and $y \in \sum_{\lambda\in \Lambda}{b}_{\lambda}$ for some $b_{\lambda} \in \mathfrak{a}_{\lambda}.$ Since each $\mathfrak{a}_{\lambda}$ is a hyperideal, it follows that $a_{\lambda}-b_{\lambda} \subseteq \mathfrak{a}_{\lambda}$ for $\lambda \in \Lambda.$ This implies that $x -y \subseteq \sum_{\lambda \in \Lambda}(a_{\lambda}-b_{\lambda}),$ where $a_{\lambda} - b_{\lambda} \subseteq \mathfrak{a}_{\lambda},$ so $x-y \subseteq \sum_{\lambda \in \Lambda}\mathfrak{a}_{\lambda}.$ Now let $r \in R.$ For each $\lambda \in \Lambda,$ since $\mathfrak{a}_{\lambda}$ is a hyperideal of $R,$ it follows that $ra_{\lambda} \in \mathfrak{a}_{\lambda}$ and $a_{\lambda}r \in \mathfrak{a}_{\lambda},$ for each $\lambda \in \Lambda$ and hence we conclude that $rx \in \sum_{\lambda \in \Lambda}ra_{\lambda}$ and $xr \in \sum_{\lambda \in \Lambda}a_{\lambda}r.$ 
\end{proof}

Recall that if $\mathfrak{a}$ and $\mathfrak{b}$ are nonempty subsets of a hyperring $R.$ Then the product $\mathfrak{a}\mathfrak{b}$ is defined by
$$\mathfrak{a}\mathfrak{b}= \left\{x \mid x \in \sum_{i=1}^{n} a_ib_i,a_i\in \mathfrak{a},b_i \in \mathfrak{b}, n \in \mathbb{Z}^{+} \right\} .$$ Moreover, if $\mathfrak{a}$ and $\mathfrak{b}$ are hyperideals, $\mathfrak{a}\mathfrak{b}$ is also a hyperideal of $R$ (see \cite[p. 87]{DavLeo}). 
Let $X$ be a subset of a hyperring $R.$ Let $\{ \mathfrak{a}_{i}\mid i \in I\}$ be the family of all hyperideals in $R$ which contain $X$. Then $\bigcap_{i\in I} \mathfrak{a}_i$, is called the hyperideal \emph{generated by $X$} and we denoted it by $\langle X \rangle$. 
A proper hyperideal $\mathfrak{m}$ of a hyperring $R$ is called \emph{maximal} if the only hyperideals of $R$ that contain $\mathfrak{m}$ are $\mathfrak{m}$ itself and $R.$ 
A proper hyperideal $\mathfrak{p}$ of a hyperring $R$ is called \emph{prime} if for every pair of hyperideals $\mathfrak{a}$ and $\mathfrak{b}$ of $R$ 
$\mathfrak{a}\mathfrak{b} \subseteq \mathfrak{p}$ implies either $\mathfrak{a}\subseteq \mathfrak{p}$ or $\mathfrak{b}\subseteq \mathfrak{p}.$ 

\begin{lemma}\label{max}
Every proper right hyperideal $\mathfrak{a}$  of a unital hyperring $R$ is contained in a right maximal hyperideal of $R$.
\end{lemma}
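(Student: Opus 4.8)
The plan is to invoke Zorn's lemma on the poset of proper right hyperideals containing $\mathfrak{a}$, exactly as in the classical ring-theoretic argument, with the set-valued hyperaddition handled by exploiting the total ordering of a chain. First I would set $\Sigma = \{\mathfrak{b} \mid \mathfrak{b}$ is a proper right hyperideal of $R$ with $\mathfrak{a} \subseteq \mathfrak{b}\}$, partially ordered by inclusion. This poset is nonempty because $\mathfrak{a} \in \Sigma$.

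The central step is to show that every chain $\mathcal{C} \subseteq \Sigma$ admits an upper bound in $\Sigma$, for which the natural candidate is $\mathfrak{u} = \bigcup_{\mathfrak{b} \in \mathcal{C}} \mathfrak{b}$. I would first verify that $\mathfrak{u}$ is a right hyperideal. For the subhypergroup condition, given $x, y \in \mathfrak{u}$, totality of $\mathcal{C}$ lets me place both $x$ and $y$ inside a single member $\mathfrak{b} \in \mathcal{C}$; since $\mathfrak{b}$ is a subhypergroup, $x - y \subseteq \mathfrak{b} \subseteq \mathfrak{u}$, and $0, -x \in \mathfrak{b} \subseteq \mathfrak{u}$. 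For right absorption, any $x \in \mathfrak{u}$ lies in some $\mathfrak{b} \in \mathcal{C}$, whence $xr \in \mathfrak{b} \subseteq \mathfrak{u}$ for all $r \in R$.

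Next I would check that $\mathfrak{u}$ is proper, which is where unitality is essential. In a unital hyperring, a right hyperideal containing $1$ must be all of $R$, since the right-absorption property applied with $b = 1$ gives $r = 1 \cdot r \in \mathfrak{b}$ for every $r \in R$. Hence no member of $\mathcal{C}$ contains $1$, so neither does their union $\mathfrak{u}$; thus $\mathfrak{u} \neq R$ and $\mathfrak{u} \in \Sigma$ is an upper bound for $\mathcal{C}$. Zorn's lemma then yields a maximal element $\mathfrak{m}$ of $\Sigma$, and by construction $\mathfrak{m}$ is a maximal right hyperideal of $R$ containing $\mathfrak{a}$.

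The main obstacle is the verification that the union of a chain is again a right hyperideal: the set-valued nature of $+$ forces the closure condition to be read as the containment $x - y \subseteq \mathfrak{u}$ of an entire set, and the cleanest way to secure this is to use totality of the chain to collapse the two-element input into a single member. No single $\mathfrak{b}$ need contain two arbitrary elements a priori, so the linear order is doing genuine work here; the remaining verifications are routine.
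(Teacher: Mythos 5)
Your proposal is correct and follows essentially the same route as the paper: both form the poset of proper right hyperideals containing $\mathfrak{a}$, take the union of a chain as an upper bound, use $1\notin\mathfrak{c}$ to see the union is proper, and invoke Zorn's lemma. You simply supply more detail than the paper does in verifying that the union of a chain is again a right hyperideal (via totality of the chain) and in justifying why a proper right hyperideal of a unital hyperring cannot contain $1$.
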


\begin{proof}
Suppose $\mathcal{U} = \{ \mathfrak{u}\mid \mathfrak{u}\supseteq \mathfrak{a}, \mathfrak{u}\;\text{is a proper hyperideal of}\; R\}$.  Since $\mathfrak{a}\in \mathcal{U},$ the set $\mathcal{U}$ is nonempty. Consider a chain $\{\mathfrak{c}_{\lambda}\}_{\lambda \in \Lambda}$ in $\mathcal{U}.$ Then $\mathfrak{c}=\bigcup_{\lambda \in \Lambda}\mathfrak{c}_{\lambda}$ is a proper hyperideal of $R$ and is an upper bound of the chain $\{\mathfrak{c}_{\lambda}\}_{\lambda \in \Lambda}$. Moreover, $\mathfrak{c}\neq R$ because $1\notin \mathfrak{c}.$ Hence by Zorn's lemma $\mathcal{U}$ contains a maximal element $\mathfrak{m}$, which is  a maximal hyperideal of $R$ containing $\mathfrak{a}.$
\end{proof}

\section{Primitive hyperideals}

Like in rings, to define primitive hyperideals of a hyperring, we require the notion of simple hypermodules. In the next subsection we first study simple hypermodules and their annihilators.

\subsection{Simple hypermodules}
Recall from \cite{M88} that
a (\emph{right}) \emph{Krasner $R$-hypermodule} $M$  is a canonical hypergroup $M$ endowed with an external composition $M \times R \to M$ (defined by $(m,r)\mapsto mr$) satisfying the conditions:
\begin{enumerate}[\upshape(i)]
\item $(m+m') r = m r+m'r$;
	
\item $m (r + r') = m r+m  r'$;
	
\item $m (r r') = (mr) r'$;
	
\item $m0=0;$
\end{enumerate}
for all $m,m' \in M$ and $r,r' \in R.$  If, moreover, $R$ has a multiplicative identity $1$ and $m1 = m$ for all $m\in M,$ then
$M$ is called \emph{unital}.
We shall only consider right Krasner $R$-hypermodules and hence from now on we drop the adjective ``right Krasner'' and simply say $R$-hypermodule. If an $R$-hypermodule $M$ is generated by a single element $m$ of $M$, then $M$ is called \emph{cyclic}, and we denote it by $\langle m \rangle$ or $Rm.$
The proof of the following property of an $R$-hypermodule can be found in \cite{S07}.
\medskip

\begin{lemma}\label{modprop}
If $M$ is an $R$-\emph{hypermodule}  then
$(-m)r=-(mr)=m(-r)$ for all $r\in R$ and $m\in M.$
\end{lemma}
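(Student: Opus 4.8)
The plan is to prove each equality by showing that the element in question is an additive inverse of $mr$ in the canonical hypergroup $M$ and then appealing to the uniqueness of inverses. The key elementary fact I would use is that in a canonical hypergroup $0\in x+y$ forces $y=-x$: applying the reversibility property to $0\in x+y$ gives $y\in -x+0=\{-x\}$. Thus the whole lemma reduces to establishing the two memberships $0\in mr+(-m)r$ and $0\in mr+m(-r)$, where here $0$ denotes the zero of $M$.

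The one preliminary I would need, and which is not immediate from the hypermodule axioms, is the absorption identity $0_M\,r=0_M$ for all $r\in R$. The axioms give $m\,0_R=0_M$ directly, so for any fixed $m\in M$ I would compute $0_M\,r=(m\,0_R)r=m(0_R\,r)=m\,0_R=0_M$, using associativity of the external composition $m(rr')=(mr)r'$ together with the hyperring absorption law $0_R\,r=0_R$.

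With this identity available, the first equality follows from right distributivity. Since $0_M\in m+(-m)$ by the inverse axiom of the canonical hypergroup $M$, and the external product is single-valued, I get $0_M\,r\in (m+(-m))\,r = mr+(-m)r$; the absorption identity rewrites the left-hand element as $0_M$, so $0_M\in mr+(-m)r$ and hence $(-m)r=-(mr)$. The second equality is symmetric, now using left distributivity and the inverse axiom in the hyperring $R$: since $0_R\in r+(-r)$, I obtain $m\,0_R\in m(r+(-r))=mr+m(-r)$, and $m\,0_R=0_M$ gives $0_M\in mr+m(-r)$, whence $m(-r)=-(mr)$.

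The only point requiring real care is the absorption identity $0_M\,r=0_M$, precisely because the stated axioms supply $m\,0_R=0_M$ but not its ``other-sided'' counterpart; routing through associativity of the external composition and the ring's absorption law is what makes it work. Once that is in place, both equalities are short consequences of the two distributive laws and the uniqueness of additive inverses in a canonical hypergroup.
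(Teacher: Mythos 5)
Your proof is correct and complete. Note that the paper itself gives no argument for this lemma---it simply cites \cite{S07}---so there is nothing internal to compare against; your route (establish $0_M\,r=0_M$ via $0_M r=(m0_R)r=m(0_R r)=m0_R=0_M$, push $0_M\in m+(-m)$ and $0_R\in r+(-r)$ through the two distributivity axioms, and invoke the fact that $0\in x+y$ forces $y=-x$ by reversibility together with $-x+0=\{-x\}$) is the standard one and handles the only genuinely nontrivial point, namely that the left-sided absorption $0_M\,r=0_M$ is not an axiom and must be derived.
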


A \emph{subhypermodule} $S$ of a hypermodule $M$  is a subcanonical hypergroup of $M$ such that $sr\subseteq S,$ for all $r\in R$ and for all $s\in S.$ 
If $M$, $N$ are $R$-hypermodules, then a (\emph{strong})\emph{ $R$-hypermodule homomorphism} from $M$ into $N$ is a map $\mu\colon M\to N$ such that $\mu(m+m')=\mu(m)+\mu(m')$ and $\mu(mr)=\mu(m)r$ for all $r\in R$ and for all $m, m'\in M.$ A hypermodule homomorphism $\mu$ is called an \emph{isomorphism} if $\mu$ is also a bijection on the underlying sets. 

If $M$ is a $R$-hypermodule and $K$ is a subhypermodule of $M$, then the set $M/K = \{K+a \mid a \in M\}$ endowed with a hyperoperation  $+ : M/K \times M/K \rightarrow {\mathcal{P}}^{*}(M/K)$
and  an $R$-action $\cdot:M/K \times R \rightarrow M/K$ respectively defined as: 
\begin{align*}
(K+a) + (K+a') &= \{K+b \mid b\in a + a'\};\\ (K+a)\cdot r &= \{K+b \mid b \in a r\},
\end{align*}
for every $a, a',b \in  M$ and $r \in R,$  is called the \emph{quotient hypermodule} of M. It is easy to show (see \cite[Corollary 2.2.8]{S07}) that $\mathtt{ker}(\mu)$ is a subhypermodule of $M$ and $\mathtt{im}(\mu)$ is a subhypermodule of $N.$ 
As for modules over rings, we also have the fundamental theorem of homomorphisms for hypermodules.

\begin{proposition}\cite[Theorem 2.2.14]{S07}\label{fth} If $\mu\colon M\to M'$ is a hypermodule homomorphism, then $M/\mathtt{ker}(\mu)$ is isomorphic to $\mathtt{im}(\mu).$  \end{proposition}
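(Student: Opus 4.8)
The plan is to construct an explicit isomorphism. Writing $K = \mathtt{ker}(\mu)$, I would define $\bar\mu\colon M/K \to \mathtt{im}(\mu)$ by $\bar\mu(K+a) = \mu(a)$ and then verify that $\bar\mu$ is a well-defined bijective strong hypermodule homomorphism; this is exactly the assertion.

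The technical heart of the argument---the step I expect to cost the most---is the equivalence
\[
K + a = K + a' \iff \mu(a) = \mu(a'), \qquad a,a' \in M,
\]
because it simultaneously yields the well-definedness and the injectivity of $\bar\mu$. To establish it I would first record two auxiliary facts in $M'$: that $\mu(0) = 0$, which follows from $\mu(0) = \mu(m\cdot 0) = \mu(m)\cdot 0 = 0$; and that $\mu(-a') = -\mu(a')$, which follows from $0 = \mu(0) \in \mu(a') + \mu(-a')$ together with the uniqueness of additive inverses in property (iv). Combining these with the additivity of $\mu$ and the standard fact that $x = y \iff 0 \in x - y$ in a canonical hypergroup, I would deduce that $\mu(a) = \mu(a')$ is equivalent to $0 \in \mu(a) - \mu(a') = \{\mu(z) \mid z \in a - a'\}$, i.e.\ to the existence of some $z \in (a-a')\cap K$. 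It then remains to identify this condition with coset equality: if $z \in (a-a')\cap K$, then commutativity and the reversibility property (v) give $a \in z + a'$, so that $a \in K + a'$ and hence $K + a \subseteq K + a'$; applying the same reasoning to $-z \in (a'-a)\cap K$ gives the reverse inclusion. Conversely, $K + a = K + a'$ forces $a \in K + a'$ (since $a \in 0 + a$), and reversibility again produces an element of $(a-a')\cap K$.

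Once this equivalence is in place the remainder is routine, provided one keeps track of the multivaluedness of hyperaddition---in particular that $\mu$ applied to a subset denotes the set of images, so that an identity such as $\mu(a+a') = \mu(a) + \mu(a')$ is an equality of subsets of $M'$. Surjectivity of $\bar\mu$ is immediate, since each element of $\mathtt{im}(\mu)$ is $\mu(a) = \bar\mu(K+a)$. For the homomorphism property I would compute, from the definition of the quotient operations,
\[
\bar\mu\big((K+a)+(K+a')\big) = \{\mu(b) \mid b \in a+a'\} = \mu(a)+\mu(a') = \bar\mu(K+a)+\bar\mu(K+a'),
\]
while, because the external action $ar$ is single-valued, $\bar\mu\big((K+a)\cdot r\big) = \mu(ar) = \mu(a)r = \bar\mu(K+a)\cdot r$. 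Thus $\bar\mu$ is a bijective strong hypermodule homomorphism, that is, an isomorphism.
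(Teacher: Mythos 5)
The paper does not actually prove this proposition---it is imported verbatim from \cite[Theorem 2.2.14]{S07}---so there is no in-paper argument to compare against; your proposal supplies the missing proof. It is correct and self-contained: the explicit map $\bar\mu(K+a)=\mu(a)$ is the standard first-isomorphism construction, and you handle the points that are genuinely delicate in the hyper-setting, namely $\mu(0)=0$, $\mu(-a)=-\mu(a)$ via uniqueness of inverses, the equivalence $\mu(a)=\mu(a')\Leftrightarrow (a-a')\cap K\neq\emptyset$ obtained from reversibility, and the fact that $\mu(a+a')=\mu(a)+\mu(a')$ is an equality of subsets. The only step you leave slightly implicit is the passage from $a\in K+a'$ to the coset inclusion $K+a\subseteq K+a'$, which requires $K+K\subseteq K$, i.e.\ that $\mathtt{ker}(\mu)$ is a subhypermodule; the paper records this fact just before the statement, so the argument is complete.
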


An $R$-hypermodule $M$ is called \emph{simple} if $RM\neq 0$ and  $M$ has no subhypermodules other
than $0$ and $M.$  The following proposition characterizes a simple hypermodule as a cyclic hypermodule generated by a nonzero element.

\begin{proposition}
A nonzero $R$-hypermodule $M$ is simple if and only if $M=mR$ for every nonzero $m\in M$.
\end{proposition}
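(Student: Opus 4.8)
The plan is to prove both implications. For the forward direction, suppose $M$ is simple and fix a nonzero $m \in M$. I would consider the cyclic subhypermodule $mR = \{x \mid x \in mr,\ r \in R\}$ generated by $m$ (using the notation $Rm$ from the text for the right-action version). The key point is that $mR$ is a subhypermodule of $M$, so by simplicity it must equal either $0$ or $M$. To rule out the zero option, I would use the hypothesis $RM \neq 0$ together with the fact that the set $\{m \in M \mid mR = 0\}$ is itself a subhypermodule (one checks it is closed under the hyperoperation via distributivity and under the $R$-action via associativity). By simplicity this annihilating set is either $0$ or all of $M$; since $RM \neq 0$ it cannot be all of $M$, so it is $0$, and therefore every nonzero $m$ has $mR \neq 0$. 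Hence $mR = M$ for every nonzero $m$.

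For the converse, I would assume $M = mR$ for every nonzero $m \in M$ and show $M$ is simple. First, $RM \neq 0$ follows immediately: pick any nonzero $m$, and since $M = mR \neq 0$, the action is nontrivial. Next I would show $M$ has no proper nonzero subhypermodules. Let $K$ be a nonzero subhypermodule and choose a nonzero element $k \in K$. Since $K$ is closed under the $R$-action, $kR \subseteq K$; but by hypothesis $kR = M$, so $M \subseteq K \subseteq M$, forcing $K = M$. This gives simplicity.

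I expect the main obstacle to be the forward direction, specifically verifying carefully that the relevant sets are genuinely subhypermodules in the hyperstructure setting, where sums are multi-valued. The subtlety is that closure under the hyperoperation requires that for $x, y$ in the candidate set, the \emph{entire} hyperproduct $x + y$ lies in the set, and one must lean on the distributive laws (i)–(ii) for hypermodules together with Lemma~\ref{modprop} to handle negatives. In particular, confirming that the annihilator-type set $\{m \mid mR = 0\}$ is closed under $+$ uses the distributivity $(m+m')r = mr + m'r$ and the fact that $0 + 0 = \{0\}$; one should check that this forces $(x+y)R = 0$ whenever $xR = yR = 0$. A secondary technical point is ensuring $mR$ is nonempty and closed under addition, which again follows from distributivity in $R$ and the module axioms, but must be stated rather than assumed. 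Once these closure verifications are in place, both directions reduce to the simplicity dichotomy applied to the appropriate subhypermodule.
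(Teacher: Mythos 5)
Your proof is correct, and its overall architecture is the same as the paper's: both directions hinge on applying the simplicity dichotomy to the subhypermodule $mR$, and your converse (pick a nonzero $k$ in a nonzero subhypermodule $K$, then $M=kR\subseteq K$) is word-for-word the paper's argument. The one genuine difference is in the forward direction, and it is in your favour: the paper merely asserts that for a simple $M$ ``there exists a $0\neq m\in M$ and $mR$ is a nonzero subhypermodule of $M$, so $mR=M$,'' which neither justifies why $mR\neq 0$ nor addresses the quantifier in the statement, which demands $mR=M$ for \emph{every} nonzero $m$. Your insertion of the auxiliary set $N=\{m\in M\mid mR=0\}$, the verification that it is a subhypermodule (closure under the hyperoperation via $(x+y)r=xr+yr=0+0=\{0\}$, negatives via Lemma~\ref{modprop}, and the $R$-action via associativity), and the use of $RM\neq 0$ to force $N=0$, is exactly the missing step; it buys you the ``for every nonzero $m$'' conclusion that the paper's one-line argument does not actually deliver. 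Your flagged technical points (that $mR$ is closed under the multi-valued addition because $mr+mr'=m(r+r')\subseteq mR$, and similarly for $N$) are the right things to check and all go through. In short: same route, but your version closes a real gap in the paper's forward direction.
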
 

\begin{proof}
If $M$ is simple, and then there exists a $0 \neq m \in M$ and $mR$ is a nonzero subhypermodule of $M,$ so $mR = M.$
Conversely, if $N \neq 0$ is a subhypermodule of $M,$ then $N$ must contain a nonzero element, say $m$ of $M$. Then we have that $M =mR \subseteq N,$ showing that $N=M.$
\end{proof}

The following example of subhypermodule is going to play an important role in studying properties of primitive hyperideals.

\begin{lemma}
If $M$ is an $R$-hypermodule and $\mathfrak{a}$ a hyperideal of $R$, then $$M\mathfrak{a}  = \left\{\sum_{i=1}^{k}m_ia_i\mid m_i \in M, a_i \in \mathfrak{a}, k\in \mathbb{Z}^+\right\}$$ is a subhypermodule of $M$.
\end{lemma}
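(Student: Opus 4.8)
The plan is to verify the two conditions in the definition of a subhypermodule for the set $M\mathfrak{a}$: that it is a subcanonical hypergroup of $M$, and that it is closed under the $R$-action. For the first condition I would invoke the subgroup-type criterion for canonical hypergroups, namely that a nonempty subset $S$ is a subcanonical hypergroup exactly when $x-y\subseteq S$ for all $x,y\in S$. This criterion is immediate to justify: taking $x=y$ yields $0\in S$; taking $x=0$ yields $-y\in S$ for each $y\in S$; and then $x+y=x-(-y)\subseteq S$ gives closure under the hyperaddition. Nonemptiness of $M\mathfrak{a}$ is clear, since $0\in\mathfrak{a}$ and axiom (iv) give $0=m\,0\in M\mathfrak{a}$ for any $m\in M$.

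To check closure under subtraction, I would take $x\in\sum_{i=1}^{k}m_ia_i$ and $y\in\sum_{j=1}^{\ell}m'_ja'_j$ with all $m_i,m'_j\in M$ and $a_i,a'_j\in\mathfrak{a}$. Since in a canonical hypergroup negation carries a hypersum to the hypersum of the negated summands, $y\in\sum_j m'_ja'_j$ forces $-y\in\sum_j -(m'_ja'_j)$, and Lemma \ref{modprop} rewrites each summand as $m'_j(-a'_j)$, where $-a'_j\in\mathfrak{a}$ because $\mathfrak{a}$ is a hyperideal. Consequently
$$x-y=x+(-y)\subseteq\Big(\sum_{i=1}^{k}m_ia_i\Big)+\Big(\sum_{j=1}^{\ell}m'_j(-a'_j)\Big),$$
and the right-hand side is a finite hypersum of products of an element of $M$ with an element of $\mathfrak{a}$. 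Hence every element of it, and in particular every element of $x-y$, lies in $M\mathfrak{a}$, so $x-y\subseteq M\mathfrak{a}$.

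For stability under the action I would take $s\in\sum_{i=1}^{k}m_ia_i$ and $r\in R$. Applying the right-distributivity axiom (i) over the hypersum and the associativity axiom (iii) gives $sr\in\big(\sum_i m_ia_i\big)r=\sum_i m_i(a_ir)$; since $\mathfrak{a}$ is a (right) hyperideal each $a_ir\in\mathfrak{a}$, whence $sr\in M\mathfrak{a}$. Together with the previous paragraph this shows $M\mathfrak{a}$ is a subhypermodule.

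The step I expect to require the most care is closure under subtraction, where one must be precise about how the single-valued products $m_ia_i$ interact with the multivalued sum. In particular, the fact that the negative of a member of a hypersum lies in the hypersum of the negatives is exactly where Lemma \ref{modprop}, together with the canonical-hypergroup axioms guaranteeing reversibility and the uniqueness of negatives, is doing the real work; the distributivity and associativity needed for the action step are comparatively routine.
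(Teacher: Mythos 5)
Your proposal is correct and follows essentially the same route as the paper: both verify closure under hyperdifference by pushing the negation inside the products via Lemma \ref{modprop} (the paper writes $-(m_ja_j)=(-m_j)a_j$ with $-m_j\in M$, you write $m'_j(-a'_j)$ with $-a'_j\in\mathfrak{a}$ --- a cosmetic difference), and both verify stability under the $R$-action using associativity and the fact that $\mathfrak{a}$ is a right hyperideal. Your version is in fact slightly more careful than the paper's, since you distinguish elements of the hypersums from the hypersums themselves and explicitly record nonemptiness and the identity $-(x+y)=(-x)+(-y)$ for canonical hypergroups.
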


\begin{proof}
Let $\sum_{i=1}^{k}m_ia_i$ and $\sum_{j=1}^{l}m_ja_j$ be two elements of $M\mathfrak{a}.$
Then 
\begin{flalign*}
\sum_{i=1}^{k}m_ia_i - \sum_{j=i}^{l}m_ja_j & = \sum_{i=1}^{k}m_ia_i + \sum_{j=1}^{l}(-m_j)a_j
\end{flalign*}
where $-m_j \in M$ since $(M,+)$ is a canonical hypergroup. Hence, $\sum_{i=1}^{k}m_ia_i - \sum_{j=1}^{l}m_ja_j \subseteq M\mathfrak{a}.$
Now let $r \in R.$ Then 
\begin{flalign*}
\left(\sum_{i=1}^{k}m_ia_i\right)r& = \sum_{i=1}^{k}m_i(a_ir) 
\end{flalign*}
where $a_ir \in R$ since $\mathfrak{a}$ is a hyperideal of $R.$ Thus $\left(\sum_{i=1}^{k}m_ia_i \right)r \in M\mathfrak{a}.$
\end{proof}

If $M$ is an $R$-hypermodule then the additive subhypergroup $Mr$ of $M$ generated by
the elements of the form $\{mr \mid m \in M, r \in R\}$ is a subhypermodule of $M$. 
The (\emph{right}) \emph{annihilator} of a $R$-hypermodule $M$ is defined by $$\mathtt{Ann}_{R}(M)=\{ r\in R\mid mr=0\;\text{for all}\;\; m\in M\}.$$ When $M=\{m\},$ we write $ \mathtt{Ann}_{R}(m)$ for $ \mathtt{Ann}_{R}(\{m\}).$ 
If $\mathtt{Ann}_{R}(M)=\{0\}$ then $M$ is said to be a \emph{faithful} $R$-hypermodule. Like in rings, we have the following. 

\begin{lemma}
An annihilator $\mathtt{Ann}_{R}(M)$ is a hyperideal of $R$.
\end{lemma}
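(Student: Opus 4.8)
The plan is to show that $\mathfrak{a} = \mathtt{Ann}_R(M)$ is an additive subhypergroup of $R$ that is closed under multiplication by arbitrary elements of $R$ on both sides. Since $\mathtt{Ann}_R(M)$ is by definition a subset of $R$, I must verify three things: that it is a subcanonical hypergroup of $(R,+,0)$, that $rs \in \mathfrak{a}$ for all $r \in R$ and $s \in \mathfrak{a}$ (left absorption), and that $sr \in \mathfrak{a}$ for all $r \in R$ and $s \in \mathfrak{a}$ (right absorption). Throughout I would use the hypermodule axioms (i)--(iv) together with Lemma~\ref{modprop}, which supplies the identity $(-m)r = -(mr) = m(-r)$ needed to handle additive inverses.

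First I would check that $\mathfrak{a}$ is closed under the hyperaddition and contains additive inverses. Take $s, t \in \mathtt{Ann}_R(M)$, so that $ms = 0$ and $mt = 0$ for every $m \in M$. For any $x \in s - t = s + (-t)$ and any $m \in M$, distributivity of the $R$-action over hyperaddition (axiom (ii)) gives $mx \in m(s + (-t)) = ms + m(-t)$. By Lemma~\ref{modprop}, $m(-t) = -(mt) = -0 = 0$, and $ms = 0$, so $mx \in 0 + 0 = \{0\}$, forcing $mx = 0$. Since this holds for every $m$, every element $x$ of the hypersum $s - t$ lies in $\mathfrak{a}$, i.e.\ $s - t \subseteq \mathfrak{a}$; this simultaneously gives closure and inverses, so $\mathfrak{a}$ is a subcanonical hypergroup. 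Note $0 \in \mathfrak{a}$ since $m0 = 0$ by axiom (iv).

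Next I would verify the two absorption conditions. For right absorption, let $s \in \mathfrak{a}$ and $r \in R$; for any $m \in M$, associativity of the action (axiom (iii)) gives $m(sr) = (ms)r = 0\,r$, and $0\,r = 0$ since $0$ is the additive identity of the hypermodule (this follows from axiom (i) applied with $m = m' = 0$ using canonicity, or directly from the module structure). Hence $sr \in \mathfrak{a}$. For left absorption, let $s \in \mathfrak{a}$ and $r \in R$; for any $m \in M$ I would use associativity in the form $m(rs) = (mr)s$, and since $mr \in M$ and $s$ annihilates every element of $M$, we get $(mr)s = 0$, so $rs \in \mathfrak{a}$.

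The only mildly delicate point — and the step I would watch most carefully — is the handling of hypersums, since $s - t$ is a \emph{set} rather than a single element, so the subhypergroup condition must be phrased as the inclusion $s - t \subseteq \mathfrak{a}$ rather than membership, and the verification must hold for every element of that set. The identity $m(-t) = -(mt)$ from Lemma~\ref{modprop} is exactly what makes the inverse computation go through cleanly, and the fact that $0 + 0 = \{0\}$ in a canonical hypergroup (axiom (iii) of the definition) is what collapses the hypersum to a single value. Once these set-theoretic subtleties are respected, the argument is otherwise a direct transcription of the classical ring-theoretic proof.
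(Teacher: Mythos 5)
Your proposal is correct and follows essentially the same route as the paper's own proof: closure under hyperdifference via distributivity and Lemma~\ref{modprop}, and the two absorption conditions via associativity of the action. You are merely more explicit about the set-valued nature of $s-t$ and about justifying $0_M r = 0_M$, both of which the paper glosses over.
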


\begin{proof}
Let $x, x' \in \mathtt{Ann}_{R}(M)$, $r\in R$, and $m \in M.$ Then
$$m(x-x')  =mx +m (-x')=mx - mx' =0+0 = 0,
$$
where  the second equality follows from  Lemma \ref{modprop}.
Furthermore,
$m(xr)  =(mx)r=0r=0
$ and $m(rx)=(mr)x=0.$
Thus, $\mathtt{Ann}_{R}(M)$ is a hyperideal of $R$.
\end{proof}

\subsection{Primitivity}

A proper hyperideal of a hyperring $R$ is called \emph{primitive} if it is the annihilator of a simple $R$-hypermodule. We shall denote the set of all primitive hyperideals of $R$ by $\mathtt{Prim}(R)$. A hyperring $R$ is said to be \emph{primitive} if $\{0\}$ is a primitive hyperideal of $R.$ The next two propositions show some implications between maximal, prime, and primitive hyperideals.

\begin{proposition}\label{prtpr}
Every primitive hyperideal is a prime hyperideal.
\end{proposition}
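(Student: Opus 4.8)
The plan is to fix a simple $R$-hypermodule $M$ with $\mathfrak{p}=\mathtt{Ann}_{R}(M)$, which is proper by the very definition of a primitive hyperideal, and then to verify the defining implication of primeness directly. So I would take two hyperideals $\mathfrak{a}$ and $\mathfrak{b}$ of $R$ with $\mathfrak{a}\mathfrak{b}\subseteq\mathfrak{p}$, assume $\mathfrak{a}\not\subseteq\mathfrak{p}$, and aim to show $\mathfrak{b}\subseteq\mathfrak{p}$.

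The key device is the subhypermodule $M\mathfrak{a}$ supplied by the earlier lemma. First I would observe that $\mathfrak{a}\not\subseteq\mathfrak{p}=\mathtt{Ann}_{R}(M)$ forces $M\mathfrak{a}\neq 0$: otherwise $ma=0$ for all $m\in M$ and $a\in\mathfrak{a}$, which would put $\mathfrak{a}$ back inside the annihilator. Since $M$ is simple, its only subhypermodules are $0$ and $M$, so $M\mathfrak{a}=M$.

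Now fix $b\in\mathfrak{b}$ and an arbitrary $m\in M=M\mathfrak{a}$. By the definition of $M\mathfrak{a}$ we may write $m\in\sum_{i=1}^{k}m_ia_i$ with $m_i\in M$ and $a_i\in\mathfrak{a}$. Applying $b$ on the right and using the distributivity axiom $(m+m')r=mr+m'r$ (iterated along the hypersum) together with the associativity axiom $m(rr')=(mr)r'$, I obtain $mb\in\sum_{i=1}^{k}(m_ia_i)b=\sum_{i=1}^{k}m_i(a_ib)$. Here each $a_ib$ is a \emph{single} element of $\mathfrak{a}\mathfrak{b}$, since both the multiplication on $R$ and the external $R$-action are single-valued, and hence $a_ib\in\mathfrak{p}=\mathtt{Ann}_{R}(M)$, so that $m_i(a_ib)=0$ for every $i$. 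Thus $mb\in 0+\cdots+0=\{0\}$, giving $mb=0$. As $m$ was arbitrary, $Mb=0$, so $b\in\mathtt{Ann}_{R}(M)=\mathfrak{p}$; and as $b$ was arbitrary, $\mathfrak{b}\subseteq\mathfrak{p}$, which is precisely the required implication.

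The main obstacle, and really the only subtle point, is the bookkeeping in the step $mb\in\sum_i m_i(a_ib)$: one must exploit that the hypermodule addition is genuinely multivalued, so that \emph{membership} (not equality) propagates through the hypersum, while the ring multiplication and the $R$-action are single-valued, which is exactly what lets each term $m_i(a_ib)$ collapse to $0$. Everything else is a faithful translation of the classical ring-theoretic argument that annihilators of simple modules are prime.
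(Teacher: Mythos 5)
Your proof is correct and follows essentially the same route as the paper: both exploit that $M\mathfrak{a}$ is a subhypermodule, so simplicity forces $M\mathfrak{a}=M$, and then push the second hyperideal through via $(M\mathfrak{a})\mathfrak{b}=M(\mathfrak{a}\mathfrak{b})$. The only difference is presentational — the paper argues contrapositively at the level of hyperideal products, while you verify the same identity elementwise (in fact more carefully than the paper does).
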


\begin{proof}
Suppose $\mathfrak{p}=\mathtt{Ann}_{R}(M)$ for some simple $R$-hypermodule $M$, and  $\mathfrak{b}$ is a hyperideal of $R$ such  that $M\mathfrak{b} \neq 0,$ that is, $\mathfrak{b} \not\subseteq \mathfrak{p}.$ Since $M$ is simple, we must have $M\mathfrak{b} = M.$
If $\mathfrak{a}$ is a nonzero hyperideal of $R$, then
\begin{equation}\label{mba}
M(\mathfrak{b}\mathfrak{a}) = (M\mathfrak{b})\mathfrak{a} = M\mathfrak{a} = M,
\end{equation} which implies $M\mathfrak{a} \neq 0,$ that is, $\mathfrak{a} \not\subseteq \mathfrak{p}.$ Therefore, from (\ref{mba}) it follows that  $\mathfrak{b}\mathfrak{a} \not\subseteq \mathfrak{p}.$ 
\end{proof}

\begin{proposition}\label{maxi}
Every maximal hyperideal of a unital hyperring is a primitive hyperideal.
\end{proposition}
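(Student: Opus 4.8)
The plan is to realize $\mathfrak{m}$ as the annihilator of a simple hypermodule built from the quotient hyperring $S = R/\mathfrak{m}$. First I would record that $S$ is a unital hyperring with identity $\mathfrak{m}+1$, and that it is \emph{simple} in the sense of having no hyperideals other than $0$ and $S$. This is immediate from the maximality of $\mathfrak{m}$ together with the standard bijection between hyperideals of $R/\mathfrak{m}$ and hyperideals of $R$ containing $\mathfrak{m}$, given by $\mathfrak{b}\mapsto \{\mathfrak{m}+r\mid r\in\mathfrak{b}\}$; the coset hyperideal $\mathfrak{m}$ corresponds to $0$ and $R$ to $S$. Let $\pi\colon R\to S$ denote the canonical projection, which is a surjective hyperring homomorphism.

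Next I would manufacture a faithful simple $S$-hypermodule. Since $S$ is unital and $\{0\}$ is a proper right hyperideal, Lemma \ref{max} supplies a maximal right hyperideal $\mathfrak{n}$ of $S$. Form the quotient hypermodule $N = S/\mathfrak{n}$ with its right $S$-action. I claim $N$ is simple: its subhypermodules correspond to the right hyperideals of $S$ lying between $\mathfrak{n}$ and $S$, so maximality of $\mathfrak{n}$ leaves only $0$ and $N$, and $N\cdot 1 = N \neq 0$ since $\mathfrak{n}$ is proper. Now $\mathtt{Ann}_{S}(N)$ is a two-sided hyperideal of $S$ by the annihilator lemma established above, and it is \emph{proper} because $1\notin \mathtt{Ann}_{S}(N)$ (again as $N\cdot 1 = N\neq 0$). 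Since $S$ is simple, this forces $\mathtt{Ann}_{S}(N)=\{0\}$, so $N$ is a faithful simple $S$-hypermodule. This is the step I expect to carry the real content: the passage from a maximal \emph{two-sided} hyperideal to a faithful simple module is precisely the assertion that a simple unital hyperring is primitive, and it hinges on the annihilator of a right hypermodule being two-sided.

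Finally I would pull $N$ back along $\pi$ to an $R$-hypermodule by setting $n\cdot r = n\,\pi(r)$. Because $\pi$ is a surjective hyperring homomorphism, the hypermodule axioms are inherited from those of the $S$-hypermodule $N$, and the $R$-subhypermodules of $N$ coincide with its $S$-subhypermodules (the $R$-action factors through the surjection $\pi$), so $N$ remains simple over $R$. It then remains to compute the annihilator: $r\in \mathtt{Ann}_{R}(N)$ if and only if $n\,\pi(r)=0$ for all $n\in N$, that is, $\pi(r)\in \mathtt{Ann}_{S}(N)=\{0\}$, that is, $\pi(r)=0$, that is, $r\in\mathfrak{m}$. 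Hence $\mathtt{Ann}_{R}(N)=\mathfrak{m}$ with $N$ a simple $R$-hypermodule, so $\mathfrak{m}$ is primitive.

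As for obstacles, conceptually the only nonformal point is the simple-hyperring-is-primitive step described in the middle paragraph; everything else (the two correspondence statements for the quotients, well-definedness of the pulled-back action, and two-sidedness of the annihilator) is routine hyperstructure bookkeeping, though it deserves care since in the hyperring setting sums are sets and one must track inclusions $\subseteq$ rather than equalities at several places. I would also flag that the argument uses no commutativity and invokes the unital hypothesis twice: once to secure a maximal right hyperideal $\mathfrak{n}$ via Lemma \ref{max}, and once to guarantee that the identity fails to annihilate $N$.
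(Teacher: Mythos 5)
Your proposal is correct and is essentially the paper's own argument in quotient-hyperring clothing: both rest on Lemma \ref{max} to produce a maximal right hyperideal above $\mathfrak{m}$ (your $\mathfrak{n}\subseteq S$ corresponds to the paper's $\mathfrak{b}\subseteq R$), both take the associated simple quotient hypermodule (your $N=S/\mathfrak{n}$ is isomorphic as an $R$-hypermodule to the paper's $R/\mathfrak{b}$), and both conclude by noting that its annihilator is a proper two-sided hyperideal which maximality of $\mathfrak{m}$ forces to equal $\mathfrak{m}$. The only difference is that you factor through $R/\mathfrak{m}$ and phrase the key step as ``a simple unital hyperring is primitive,'' whereas the paper works directly in $R$.
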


\begin{proof}
Suppose $\mathfrak{a}$ is maximal hyperideal of a hyperring $R.$ Then by Lemma \ref{max}, $\mathfrak{a}$ is contained in a maximal right hyperideal $\mathfrak{b}$ of $R$  and $\mathfrak{a} \subseteq \mathtt{Ann}(R/\mathfrak{b}).$ Since $\mathfrak{a}$ is a maximal hyperideal of $R,$ we must have that $\mathfrak{a}= \mathtt{Ann}(R/\mathfrak{b}),$ and thus $\mathfrak{a}$ is the annihilator of a simple $R$-hypermodule $R/\mathfrak{b}$. 
\end{proof}

From the definition above, it is clear that a hyperring $R$ is primitive if and only if the zero hyperideal of $R$ is a primitive hyperideal. This equivalence can further be generalized for an arbitrary primitive hyperideal of $R$.

\begin{proposition}
A hyperideal $\mathfrak{p}$ of a hyperring $R$ is  primitive if and only if $R/ \mathfrak{p}$ is a primitive hyperring.
\end{proposition}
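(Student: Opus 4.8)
The plan is to use the familiar correspondence between simple modules over $R$ and over the quotient $R/\mathfrak{p}$, passing a simple hypermodule back and forth along the canonical projection $\pi\colon R\to R/\mathfrak{p}$, $r\mapsto\mathfrak{p}+r$, while tracking annihilators. Recall that, by definition, $R/\mathfrak{p}$ is primitive precisely when its zero hyperideal $\{\mathfrak{p}\}$ is primitive, that is, when $R/\mathfrak{p}$ admits a faithful simple $R/\mathfrak{p}$-hypermodule.

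For the forward implication, suppose $\mathfrak{p}=\mathtt{Ann}_{R}(M)$ for some simple $R$-hypermodule $M$. I would make $M$ into an $R/\mathfrak{p}$-hypermodule by setting $m\cdot(\mathfrak{p}+r)=mr$. The crucial point is that this action is well defined. If $\mathfrak{p}+r_1=\mathfrak{p}+r_2$, then since $0\in\mathfrak{p}$ we have $r_1\in\mathfrak{p}+r_1=\mathfrak{p}+r_2$, so $r_1\in p+r_2$ for some $p\in\mathfrak{p}$; applying the distributive axiom for hypermodules together with $mp=0$ (as $p\in\mathfrak{p}=\mathtt{Ann}_{R}(M)$) gives $mr_1\in m(p+r_2)=mp+mr_2=\{mr_2\}$, whence $mr_1=mr_2$. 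The remaining hypermodule axioms for this $R/\mathfrak{p}$-action are inherited from those for the $R$-action, and the action is nondegenerate because it is so over $R$. Since the $R$- and $R/\mathfrak{p}$-subhypermodules of $M$ coincide (the action depends on $r$ only through $\mathfrak{p}+r$), $M$ remains simple over $R/\mathfrak{p}$. Finally I would compute $\mathtt{Ann}_{R/\mathfrak{p}}(M)=\{\mathfrak{p}+r\mid mr=0\text{ for all }m\in M\}=\{\mathfrak{p}+r\mid r\in\mathfrak{p}\}=\{\mathfrak{p}\}$, so $M$ is a faithful simple $R/\mathfrak{p}$-hypermodule and $R/\mathfrak{p}$ is primitive.

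For the converse, suppose $R/\mathfrak{p}$ is primitive, witnessed by a faithful simple $R/\mathfrak{p}$-hypermodule $M$. I would pull the action back along $\pi$, defining an $R$-action on $M$ by $m\cdot r=m\cdot(\mathfrak{p}+r)$; no well-definedness issue arises here, since $\pi$ is a function. Again the subhypermodule lattices agree, so $M$ is simple as an $R$-hypermodule, and $\mathtt{Ann}_{R}(M)=\{r\in R\mid m(\mathfrak{p}+r)=0\text{ for all }m\}=\{r\in R\mid\mathfrak{p}+r\in\mathtt{Ann}_{R/\mathfrak{p}}(M)\}=\{r\in R\mid\mathfrak{p}+r=\mathfrak{p}\}=\mathfrak{p}$, using faithfulness of $M$ over $R/\mathfrak{p}$. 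As $M\neq 0$ with nondegenerate action, $\mathfrak{p}$ is proper, and it is thus the annihilator of a simple $R$-hypermodule, i.e.\ primitive.

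The routine parts are the verification of the hypermodule axioms and the identification of subhypermodules across $\pi$; I expect the only genuinely delicate step to be the well-definedness of the $R/\mathfrak{p}$-action in the forward direction, where the set-valued nature of the hyperaddition forces one to argue through the coset membership $r_1\in p+r_2$ and the distributive law $m(p+r_2)=mp+mr_2$ rather than with a single representative.
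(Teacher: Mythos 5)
Your proposal is correct and follows essentially the same route as the paper: make the simple $R$-hypermodule $M$ with $\mathtt{Ann}_R(M)=\mathfrak{p}$ into a faithful simple $R/\mathfrak{p}$-hypermodule via $m(\mathfrak{p}+r)=mr$, and conversely pull a faithful simple $R/\mathfrak{p}$-hypermodule back along the projection. You supply the well-definedness check (via $r_1\in p+r_2$ and $mp=0$) and the annihilator computations that the paper leaves implicit, but the argument is the same.
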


\begin{proof}
Suppose $\mathfrak{p}$ is primitive hyperideal of  $R$ and let $M$ be a simple $R$-hypermodule such that $\mathfrak{p} = \mathtt{Ann}(M)$. If we define 
$m(\mathfrak{p}+r)= mr \mbox{ for all $r \in r, m \in M$}$, then the additive canonical hypergroup of $M$ is also a simple $R/\mathfrak{p}$-hypermodule. On the other hand, since $\mathtt{Ann}(M) \subseteq \mathfrak{p},$ we have that $M$ is a faithful $R/\mathfrak{p}$-hypermodule. Conversely, suppose  that $N$ is a faithful simple $R/\mathfrak{p}$-hypermodule and for all $r \in R, n \in N$, define
$nr = n(\mathfrak{p}+r).$ Then the additive canonical hypergroup of $N$ becomes a simple $R$-hypermodule with $\mathtt{Ann}(N) = 
\mathfrak{p}.$
\end{proof}

Primitive hyperideals are also related to right maximal hyperideals which we will see in the next proposition. But for that we need the following

\begin{lemma}\label{lemsimple}
Let $R$ be a hyperring. An $R$-hypermodule $M$ is simple if and only if $M$ is isomorphic to $R/\mathfrak{m}$ for some maximal right hyperideal $\mathfrak{m}$ of $R.$  
\end{lemma}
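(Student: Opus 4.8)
The plan is to prove the two directions separately, following the classical ring-theoretic argument while taking care that all the module-theoretic tools needed have been established in the excerpt (namely the characterization of simple hypermodules as cyclic modules $mR$, the fundamental theorem of homomorphisms \ref{fth}, and Lemma \ref{max} guaranteeing maximal right hyperideals exist).

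For the forward direction, suppose $M$ is a simple $R$-hypermodule. Since $M$ is simple, $M$ is nonzero, so I would pick any $0 \neq m \in M$ and invoke the characterization that $M = mR$. The plan is to define the evaluation map $\mu\colon R \to M$ by $\mu(r) = mr$; this is a (strong) $R$-hypermodule homomorphism where $R$ is viewed as a right hypermodule over itself, since $\mu(r+r')=m(r+r')=mr+mr'=\mu(r)+\mu(r')$ and $\mu(rr')=m(rr')=(mr)r'=\mu(r)r'$. Because $M=mR$, the map $\mu$ is surjective, so $\mathtt{im}(\mu)=M$. By Proposition \ref{fth}, $R/\mathtt{ker}(\mu)$ is isomorphic to $M$. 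Setting $\mathfrak{m}=\mathtt{ker}(\mu)$, it remains to argue that $\mathfrak{m}$ is a \emph{maximal right} hyperideal. Since $R/\mathfrak{m}\cong M$ is simple and an isomorphism carries subhypermodules to subhypermodules, the right hypermodule $R/\mathfrak{m}$ has no subhypermodules other than $0$ and itself; by the correspondence between subhypermodules of $R/\mathfrak{m}$ and right hyperideals of $R$ containing $\mathfrak{m}$, this forces $\mathfrak{m}$ to be maximal among proper right hyperideals.

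For the converse, suppose $\mathfrak{m}$ is a maximal right hyperideal of $R$ and $M\cong R/\mathfrak{m}$. Since isomorphism preserves simplicity, it suffices to show $R/\mathfrak{m}$ is simple, i.e. $R(R/\mathfrak{m})\neq 0$ and $R/\mathfrak{m}$ has no proper nonzero subhypermodules. The second requirement follows directly from the correspondence theorem: subhypermodules of $R/\mathfrak{m}$ correspond to right hyperideals of $R$ between $\mathfrak{m}$ and $R$, and maximality of $\mathfrak{m}$ leaves only the two trivial ones. For the nondegeneracy condition $R(R/\mathfrak{m})\neq 0$, I would use that $\mathfrak{m}$ is proper, so the quotient is nonzero, and that (in the unital setting, or by the definition of maximal right hyperideal as a proper one) the action is nontrivial; concretely the coset $\mathfrak{m}+1$ acts as $(\mathfrak{m}+1)r=\mathfrak{m}+r$, which already exhausts $R/\mathfrak{m}$, so the action cannot be identically zero.

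The main obstacle I anticipate is the \textbf{correspondence between subhypermodules of the quotient $R/\mathfrak{m}$ and right hyperideals of $R$ containing $\mathfrak{m}$}, which is the hinge of both directions but is not stated as an explicit lemma in the excerpt. In the classical ring case this is routine, but in the hyperring setting one must check that preimages and images under the quotient map of subhypermodules are again subhypermodules, being careful with the multivalued addition (e.g. that $K+a$ behaving well under the hyperoperation defined on $M/K$). I would either prove this correspondence inline as a short claim or appeal to the homomorphism theorem \ref{fth} together with the observation that a surjective strong hypermodule homomorphism pulls back and pushes forward subhypermodules bijectively. A secondary subtlety is the careful handling of the nondegeneracy clause $RM\neq 0$ in the definition of simplicity, which has no exact counterpart in the usual ring statement and must be verified separately rather than folded into the submodule-lattice argument.
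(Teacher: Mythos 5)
Your proposal is correct and follows essentially the same route as the paper: the evaluation map $r\mapsto mr$ together with Proposition \ref{fth} for the forward direction, and the correspondence between subhypermodules of $R/\mathfrak{m}$ and right hyperideals containing $\mathfrak{m}$ (which the paper also uses, implicitly, via $\mathfrak{b}/\mathfrak{m}$ and $\mathfrak{b}=\{r\in R\mid \mathfrak{m}+r\in N\}$) for both maximality and the converse. You are in fact more careful than the paper on one point: the paper's proof of the converse never verifies the nondegeneracy clause $RM\neq 0$ in the definition of simplicity, which as you note requires either unitality or an extra hypothesis such as $R^2\nsubseteq\mathfrak{m}$ (the very condition the paper later imposes in the proposition following this lemma).
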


\begin{proof}
Let $M$ be a simple $R$-hypermodule. Choose $0 \neq m \in M.$ Then $mR = M$ and hence $\psi: R \rightarrow M,$ defined by $\psi(r) = mr,$ is a surjective $R$-hypermodule homomorphism. Its kernel $\mathfrak{m}$ is a right hyperideal of $R$ and by Proposition \ref{fth}, we have $R/\mathfrak{m} \cong M.$ To show that $\mathfrak{m}$ is maximal, let $\mathfrak{b}$ be a right hyperideal of $R$ such that $\mathfrak{m} \subseteq \mathfrak{b} \subseteq R.$ Then $\mathfrak{b}/\mathfrak{a}$ is a subhypermodule of $R/\mathfrak{m}.$ Now since $R/\mathfrak{m}$ is isomorphic to $M$ and since $M$ is simple, we must have either $\mathfrak{b}/\mathfrak{m}=0$ or $\mathfrak{b}/\mathfrak{a} = R/\mathfrak{a},$ and thus, either $\mathfrak{b} = \mathfrak{a}$ or $\mathfrak{b} = R,$ which implies $\mathfrak{m}$ is maximal.
Conversely, let $\mathfrak{m}$ be a maximal hyperideal of $R$ and consider a subhypermodule $N$ of $R/\mathfrak{m}.$ It is easy to see that $\mathfrak{b}=\{r \in R \mid \mathfrak{m} + r \in N\}$ is a right hyperideal of $R$ containing $\mathfrak{m}$. Thus $\mathfrak{b} = \mathfrak{a}$ or $\mathfrak{b} = R,$ giving that $N = 0$ or $N = R/\mathfrak{m}.$ Thus $R/\mathfrak{m}$ is a simple $R$-hypermodule.
\end{proof}

\begin{proposition}
If $\mathfrak{p}$ is a primitive hyperideal of a hyperring $R$ then there exists a maximal right hyperideal $\mathfrak{m}$ of $R$ such that 
\begin{equation}\label{prr}
\mathfrak{p} = \{r \in R \mid Rr \subseteq \mathfrak{m}\}.
\end{equation}
Conversely, if $\mathfrak{m}$ is a maximal right hyperideal of $R$ and if $R^2\nsubseteq \mathfrak{m}$, then the hyperideal $\mathfrak{p}$ defined in (\ref{prr}) is primitive.
\end{proposition}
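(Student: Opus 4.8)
The plan is to identify the simple hypermodule underlying a primitive hyperideal with a quotient $R/\mathfrak{m}$ by a maximal right hyperideal, using Lemma \ref{lemsimple}, and then to compute the annihilator of such a quotient explicitly, showing that the set displayed in (\ref{prr}) is exactly $\mathtt{Ann}_{R}(R/\mathfrak{m})$. Throughout I would exploit that multiplication in $R$ is single-valued, since axiom (II) makes $(R,\cdot)$ a semigroup; only the additive structure is multivalued, so the induced action on cosets takes the clean form $(\mathfrak{m}+s)r = \mathfrak{m}+sr$.

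For the forward direction I would start from $\mathfrak{p} = \mathtt{Ann}_{R}(M)$ with $M$ a simple $R$-hypermodule. By Lemma \ref{lemsimple} there is a maximal right hyperideal $\mathfrak{m}$ together with an isomorphism $R/\mathfrak{m} \cong M$. The first small observation is that isomorphic hypermodules have equal annihilators: if $\theta$ is a bijective strong homomorphism with $\theta(xr)=\theta(x)r$ and $\theta(0)=0$, then $r$ kills every element of $M$ iff it kills every element of $R/\mathfrak{m}$, directly from the definition of $\mathtt{Ann}_{R}$. Hence $\mathfrak{p} = \mathtt{Ann}_{R}(R/\mathfrak{m})$, and it remains to evaluate the latter. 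The zero of $R/\mathfrak{m}$ is the coset $\mathfrak{m}$, and the standard coset bookkeeping gives $\mathfrak{m}+sr = \mathfrak{m}$ exactly when $sr \in \mathfrak{m}$ (the reverse implication is the usual fact that a subhypergroup absorbs its own elements, and the forward one follows from $sr \in 0+sr \subseteq \mathfrak{m}+sr$). Therefore $r$ annihilates $R/\mathfrak{m}$ iff $sr \in \mathfrak{m}$ for all $s \in R$, i.e. iff $Rr \subseteq \mathfrak{m}$, which is precisely (\ref{prr}).

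For the converse I would run the same computation backwards. Given a maximal right hyperideal $\mathfrak{m}$ with $R^2 \nsubseteq \mathfrak{m}$, the quotient $R/\mathfrak{m}$ is a simple $R$-hypermodule by Lemma \ref{lemsimple}; the hypothesis $R^2 \nsubseteq \mathfrak{m}$ is exactly what makes the action nondegenerate, since $(R/\mathfrak{m})R = 0$ would force $\mathfrak{m}+sr=\mathfrak{m}$, hence $sr\in\mathfrak{m}$, for all $s,r$, that is $R^2 \subseteq \mathfrak{m}$. Thus $(R/\mathfrak{m})R \neq 0$ and $R/\mathfrak{m}$ genuinely satisfies the nontriviality clause in the definition of simplicity. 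The annihilator computation above then identifies $\mathtt{Ann}_{R}(R/\mathfrak{m})$ with the set $\mathfrak{p}$ in (\ref{prr}); this set is a hyperideal (being an annihilator) and is proper, because $(R/\mathfrak{m})R \neq 0$ rules out $\mathfrak{p}=R$. Being the annihilator of a simple hypermodule, $\mathfrak{p}$ is primitive.

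The coset manipulations and the isomorphism-invariance of annihilators are routine; the one point that genuinely deserves attention is the role of the hypothesis $R^2 \nsubseteq \mathfrak{m}$ in the converse. Without it, $R/\mathfrak{m}$ could carry the zero action and fail to be simple in the sense defined here, and $\mathfrak{p}$ could coincide with $R$. I therefore expect the main thing to get right to be verifying that simplicity in the hyperstructure sense is not vacuous, and that this nondegeneracy is precisely what the hypothesis supplies; once multiplication is noted to be single-valued, the remainder follows the classical template.
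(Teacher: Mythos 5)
Your proof is correct and follows essentially the same route as the paper: invoke Lemma \ref{lemsimple} to replace the simple hypermodule by $R/\mathfrak{m}$, and identify $\mathtt{Ann}_{R}(R/\mathfrak{m})$ with the set in (\ref{prr}). In fact you supply two verifications the paper's own proof leaves implicit --- the explicit coset computation showing $\mathtt{Ann}_{R}(R/\mathfrak{m})=\{r\in R\mid Rr\subseteq\mathfrak{m}\}$, and the observation that the hypothesis $R^2\nsubseteq\mathfrak{m}$ is exactly what guarantees $(R/\mathfrak{m})R\neq 0$, i.e.\ the nontriviality clause in the definition of simplicity --- the latter being a hypothesis the paper states but never actually uses in its argument.
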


\begin{proof}
If $\mathfrak{p} = \mathtt{Ann}_R(M)$, for some simple $R$-hypermodule $M$, then by Lemma \ref{lemsimple}, there exists a maximal right hyperideal $\mathfrak{m}$ of $R$ such that $M \cong R/\mathfrak{m}.$ This implies $\mathfrak{p} = \mathtt{Ann}_{R}(R/\mathfrak{m})$ and hence satisfies condition (\ref{prr}).
Conversely, if we assume that $\mathfrak{m}$ is a maximal right hyperideal of $R,$ then again by Lemma \ref{lemsimple}, $R/\mathfrak{m}$ is a simple $R$-hypermodule, and therefore,  $\mathtt{Ann}_{R}(R/\mathfrak{m}) = \mathfrak{p}$, a primitive hyperideal of $R.$
\end{proof}

\begin{corollary}
Every maximal right hyperideal of a unital hyperring contains a primitive hyperideal.
\end{corollary}

\section{Hyperstructure spaces}

We shall introduce Jacobson
topology in $\mathtt{Prim}(R),$ the set of primitive hyperideals of a hyperring $R,$  by defining a closure operator for the subsets of $\mathtt{Prim}(R)$. Once we have a closure operator, closed sets are defined as sets which are invariant under this closure operator. Suppose $S$ is a subset of $\mathtt{Prim}(R)$. Set $\mathcal{K}_S=\bigcap_{\mathfrak{q}\in S}\mathfrak{q}.$ We define the closure of the set $S$ as 
\begin{equation}\label{clop}
\mathtt{Cl}(S)=\left\{ \mathfrak{p}\in \mathtt{Prim}(R) \mid \mathfrak{p}\supseteq \mathcal{K}_S \right\}.
\end{equation}

If $S=\{\mathfrak{s}\}$, we will write $\mathtt{Cl}(\{\mathfrak{s}\})$ as $\mathtt{Cl}(\mathfrak{s})$. We wish to verify that the closure operation defined  in (\ref{clop}) satisfies Kuratowski's closure conditions, and that is done in the following.

\begin{proposition}\label{ztp}
The sets $\{\mathtt{Cl}(S)\}_{S\subseteq  \mathtt{Prim}(R)}$ satisfy the following conditions for all subsets $S$ and $T$ of the hyperstructure space $\mathtt{Prim}(R)$:
\begin{enumerate} [\upshape(i)]
		
\item\label{clee} $\mathtt{Cl}(\emptyset)=\emptyset;$
		
\item\label{clxx} $\mathtt{Cl}(S)\supseteq S;$
		
\item\label{clclx} $\mathtt{Cl}(\mathtt{Cl}(S))=\mathtt{Cl}(S);$
		
\item\label{clxy} $ \mathtt{Cl}(S\cup T)=\mathtt{Cl}(S)\cup \mathtt{Cl}(T).$ 
\end{enumerate}
\end{proposition}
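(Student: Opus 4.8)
The plan is to verify Kuratowski's four closure conditions directly from the definition in (\ref{clop}), using the fact that $\mathtt{Cl}(S)$ consists precisely of those primitive hyperideals containing the intersection $\mathcal{K}_S=\bigcap_{\mathfrak{q}\in S}\mathfrak{q}$. The central observation I would establish first is that the assignment $S\mapsto \mathcal{K}_S$ is inclusion-reversing: if $S\subseteq T$, then $\mathcal{K}_T\subseteq \mathcal{K}_S$, since intersecting over a larger index set yields a smaller set. Consequently $S\mapsto \mathtt{Cl}(S)$ is inclusion-preserving (monotone), a fact that feeds into conditions (\ref{clclx}) and (\ref{clxy}).

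For (\ref{clee}), I would treat $\mathcal{K}_\emptyset$ as the empty intersection, which by convention is all of $R$; since every primitive hyperideal is \emph{proper}, no $\mathfrak{p}\in\mathtt{Prim}(R)$ satisfies $\mathfrak{p}\supseteq R$, so $\mathtt{Cl}(\emptyset)=\emptyset$. Condition (\ref{clxx}) is immediate: any $\mathfrak{p}\in S$ contains $\mathcal{K}_S=\bigcap_{\mathfrak{q}\in S}\mathfrak{q}$ by definition of the intersection, hence lies in $\mathtt{Cl}(S)$. For idempotence (\ref{clclx}), the inclusion $\mathtt{Cl}(S)\subseteq \mathtt{Cl}(\mathtt{Cl}(S))$ is just (\ref{clxx}) applied to $\mathtt{Cl}(S)$; for the reverse, the key computation is that $\mathcal{K}_{\mathtt{Cl}(S)}=\mathcal{K}_S$. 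Indeed, since $S\subseteq \mathtt{Cl}(S)$ we get $\mathcal{K}_{\mathtt{Cl}(S)}\subseteq \mathcal{K}_S$, while every $\mathfrak{p}\in\mathtt{Cl}(S)$ contains $\mathcal{K}_S$, so $\mathcal{K}_S$ is contained in the intersection $\mathcal{K}_{\mathtt{Cl}(S)}$, giving equality. Any $\mathfrak{p}$ with $\mathfrak{p}\supseteq \mathcal{K}_{\mathtt{Cl}(S)}=\mathcal{K}_S$ then lies in $\mathtt{Cl}(S)$.

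The condition I expect to be the main obstacle is (\ref{clxy}), specifically the inclusion $\mathtt{Cl}(S\cup T)\subseteq \mathtt{Cl}(S)\cup \mathtt{Cl}(T)$, since the reverse follows directly from monotonicity. Here I would use that $\mathcal{K}_{S\cup T}=\mathcal{K}_S\cap \mathcal{K}_T$ and argue as follows: suppose $\mathfrak{p}\in\mathtt{Cl}(S\cup T)$, so $\mathfrak{p}\supseteq \mathcal{K}_S\cap \mathcal{K}_T$, and suppose toward a contradiction that $\mathfrak{p}\notin\mathtt{Cl}(S)$ and $\mathfrak{p}\notin\mathtt{Cl}(T)$, i.e.\ $\mathcal{K}_S\not\subseteq \mathfrak{p}$ and $\mathcal{K}_T\not\subseteq \mathfrak{p}$. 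The crux is to invoke that $\mathfrak{p}$ is prime (Proposition \ref{prtpr} guarantees every primitive hyperideal is prime): from $(\mathcal{K}_S)(\mathcal{K}_T)\subseteq \mathcal{K}_S\cap \mathcal{K}_T\subseteq \mathfrak{p}$, primeness forces $\mathcal{K}_S\subseteq \mathfrak{p}$ or $\mathcal{K}_T\subseteq \mathfrak{p}$, contradicting our assumption.

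The one technical point requiring care is the containment $(\mathcal{K}_S)(\mathcal{K}_T)\subseteq \mathcal{K}_S\cap \mathcal{K}_T$, which holds because $\mathcal{K}_S$ and $\mathcal{K}_T$ are hyperideals (intersections of hyperideals, by Proposition~1 of the excerpt), so the product $(\mathcal{K}_S)(\mathcal{K}_T)$ is contained in each factor, hence in their intersection. I would also note that the hyperring need not be commutative, so the definition of prime hyperideal in terms of products of hyperideals (rather than elements) is exactly what is needed, and Proposition \ref{prtpr} supplies primeness without any unitality or commutativity hypothesis. With these pieces in place, all four axioms follow, confirming that $\mathtt{Cl}$ is a genuine Kuratowski closure operator and hence defines a topology on $\mathtt{Prim}(R)$.
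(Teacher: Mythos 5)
Your proof is correct and follows essentially the same route as the paper: the paper likewise dismisses (i)--(iii) as straightforward and handles the nontrivial inclusion $\mathtt{Cl}(S\cup T)\subseteq \mathtt{Cl}(S)\cup\mathtt{Cl}(T)$ exactly as you do, via $\mathcal{K}_S\mathcal{K}_T\subseteq \mathcal{K}_S\cap\mathcal{K}_T=\mathcal{K}_{S\cup T}\subseteq\mathfrak{p}$ and the primeness of $\mathfrak{p}$ from Proposition \ref{prtpr}. Your write-up simply supplies the details (the empty-intersection convention for (i) and the identity $\mathcal{K}_{\mathtt{Cl}(S)}=\mathcal{K}_S$ for (iii)) that the paper leaves implicit.
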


\begin{proof}
The proofs of (\ref{clee})-(\ref{clclx}) are straightforward, whereas for (\ref{clxy}), it is easy to see that $ \mathtt{Cl}(S\cup T)\supseteq\mathtt{Cl}(S)\cup \mathtt{Cl}(T).$ To obtain the the other inclusion, let $\mathfrak{p}\in \mathtt{Cl}(S\cup T).$ Then
$$\mathfrak{p}\supseteq \mathcal{K}_{S\cup T}=\mathcal{K}_S \cap \mathcal{K}_T.$$
Since $\mathcal{K}_S$ and $\mathcal{K}_T$ are hyperideals of  the hyperring $R$, it follows that 
$$\mathcal{K}_S\mathcal{K}_T\subseteq \mathcal{K}_S \cap \mathcal{K}_T\subseteq \mathfrak{p}.$$
Since by Proposition \ref{prtpr}, $\mathfrak{p}$ is prime, either $\mathcal{K}_S\subseteq \mathfrak{p}$ or $\mathcal{K}_T\subseteq \mathfrak{p}.$ This means either $\mathfrak{p}\in \mathtt{Cl}(S)$ or $\mathfrak{p}\in \mathtt{Cl}(T)$. Thus $ \mathtt{Cl}(S\cup T)\subseteq\mathtt{Cl}(S)\cup \mathtt{Cl}(T).$ 
\end{proof}

The set $\mathtt{Prim} (R)$ of primitive hyperideals of a hyperring $R$ topologized (the Jacobson topology) by the 
closure operator defined in (\ref{clop}) is called the \emph{hyperstructure space} of the hyperring $R$. If $S$ is a subset of a hyperring $R$, then $$\mathcal{O}(S)=\{\mathfrak{p}\in \mathtt{Prim}(R)\mid \mathfrak{p}\nsupseteq \mathcal{K}_S\}$$ is a typical open subset of this topology.
It is evident from (\ref{clop}) that if $\mathfrak{p}\neq \mathfrak{p}'$ for any two $\mathfrak{p}, \mathfrak{p}'\in \mathtt{Prim}(R)$, then $\mathtt{Cl}(\mathfrak{p})\neq \mathtt{Cl}(\mathfrak{p}').$ Thus we have the following.

\begin{proposition}\label{t0a}
Every hyperstructure space $\mathtt{Prim}(R)$ is a $T_0$-space. 
\end{proposition}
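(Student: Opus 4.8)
The plan is to establish the $T_0$ separation property directly from the definition of the closure operator in~(\ref{clop}), exploiting the remark immediately preceding the statement, which already records that distinct primitive hyperideals have distinct closures. Recall that a topological space is $T_0$ precisely when any two distinct points have distinct closures of their singletons; equivalently, for any two distinct points at least one has an open neighbourhood excluding the other. So the entire content reduces to showing the injectivity of the map $\mathfrak{p} \mapsto \mathtt{Cl}(\mathfrak{p})$ on $\mathtt{Prim}(R)$.

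First I would unwind $\mathtt{Cl}(\mathfrak{p})$ using~(\ref{clop}): since $S = \{\mathfrak{p}\}$ gives $\mathcal{K}_S = \mathfrak{p}$, we have
\begin{equation*}
\mathtt{Cl}(\mathfrak{p}) = \{\mathfrak{q} \in \mathtt{Prim}(R) \mid \mathfrak{q} \supseteq \mathfrak{p}\}.
\end{equation*}
Next I would take two distinct primitive hyperideals $\mathfrak{p} \neq \mathfrak{p}'$ and argue that $\mathtt{Cl}(\mathfrak{p}) \neq \mathtt{Cl}(\mathfrak{p}')$. The key observation is that $\mathfrak{p}$ always lies in $\mathtt{Cl}(\mathfrak{p})$ by condition~(\ref{clxx}) of Proposition~\ref{ztp} (indeed $\mathfrak{p} \supseteq \mathfrak{p}$ trivially). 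Suppose for contradiction that $\mathtt{Cl}(\mathfrak{p}) = \mathtt{Cl}(\mathfrak{p}')$. Then since $\mathfrak{p} \in \mathtt{Cl}(\mathfrak{p}) = \mathtt{Cl}(\mathfrak{p}')$ we get $\mathfrak{p} \supseteq \mathfrak{p}'$, and symmetrically since $\mathfrak{p}' \in \mathtt{Cl}(\mathfrak{p}') = \mathtt{Cl}(\mathfrak{p})$ we get $\mathfrak{p}' \supseteq \mathfrak{p}$. Together these force $\mathfrak{p} = \mathfrak{p}'$, contradicting our assumption.

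Having shown the closures are distinct, I would translate this into the neighbourhood formulation of $T_0$ to make the separation explicit: since $\mathtt{Cl}(\mathfrak{p}) \neq \mathtt{Cl}(\mathfrak{p}')$, without loss of generality there is a point in one but not the other, and because $\mathfrak{p} \supseteq \mathfrak{p}'$ and $\mathfrak{p}' \supseteq \mathfrak{p}$ cannot both hold, at least one of $\mathfrak{p} \not\supseteq \mathfrak{p}'$ or $\mathfrak{p}' \not\supseteq \mathfrak{p}$ is true. If $\mathfrak{p}' \not\supseteq \mathfrak{p}$, then $\mathfrak{p}' \in \mathcal{O}(\{\mathfrak{p}\})$ while $\mathfrak{p} \notin \mathcal{O}(\{\mathfrak{p}\})$, giving an open set containing one point but not the other; the symmetric case is analogous. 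This yields the $T_0$ axiom.

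I do not anticipate a genuine obstacle here, as the statement is essentially a formal consequence of the antisymmetry of set-inclusion combined with property~(\ref{clxx}); the only point requiring mild care is to keep the argument self-contained by verifying that $\mathfrak{p} \in \mathtt{Cl}(\mathfrak{p})$ rather than merely citing the preliminary remark, and to phrase the conclusion in whichever of the two equivalent forms of $T_0$ (distinct point-closures, or separating open sets) is cleanest. The mutual-inclusion-implies-equality step is the crux, and it is immediate.
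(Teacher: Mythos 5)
Your proposal is correct and takes the same route as the paper: the paper simply observes, in the sentence immediately before the proposition, that distinct primitive hyperideals have distinct singleton closures and declares this evident from~(\ref{clop}), which is exactly the injectivity argument (mutual inclusion from $\mathfrak{p}\in\mathtt{Cl}(\mathfrak{p}')$ and $\mathfrak{p}'\in\mathtt{Cl}(\mathfrak{p})$ forcing $\mathfrak{p}=\mathfrak{p}'$) that you spell out in detail.
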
 

Using the finite intersection property, we can obtain compactness of a hyperstructure space.

\begin{theorem}\label{csb}
If $R$ is a unital hyperring  then the hyperstructure space $\mathtt{Prim}(R)$ is compact. 
\end{theorem}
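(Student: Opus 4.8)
The plan is to prove compactness by verifying the finite intersection property (FIP) for closed sets: a space is compact if and only if every family of closed sets with the FIP has nonempty total intersection. Equivalently, I will work with open covers directly and extract a finite subcover, translating everything into the language of the kernel ideals $\mathcal{K}_S$. First I would set up the correspondence between open sets and hyperideals. Recall that a typical open set has the form $\mathcal{O}(S)=\{\mathfrak{p}\in\mathtt{Prim}(R)\mid \mathfrak{p}\nsupseteq \mathcal{K}_S\}$; since only the hyperideal $\mathcal{K}_S$ matters, I may index open sets by hyperideals $\mathfrak{a}$ and write $\mathcal{O}(\mathfrak{a})=\{\mathfrak{p}\mid \mathfrak{p}\nsupseteq \mathfrak{a}\}$.

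Next I would take an arbitrary open cover $\{\mathcal{O}(\mathfrak{a}_\lambda)\}_{\lambda\in\Lambda}$ of $\mathtt{Prim}(R)$ and let $\mathfrak{a}=\sum_{\lambda\in\Lambda}\mathfrak{a}_\lambda$, which is a hyperideal by Proposition 2.1(ii). The key claim is that the cover being exhaustive forces $\mathfrak{a}=R$: indeed, if $\mathfrak{a}$ were proper, then by Lemma \ref{max} it would be contained in some maximal hyperideal $\mathfrak{m}$, and I would argue that the associated primitive hyperideal $\mathfrak{p}=\{r\in R\mid Rr\subseteq\mathfrak{m}\}$ (or, more cleanly, the annihilator of the simple hypermodule $R/\mathfrak{m}$ provided by Lemma \ref{lemsimple}) contains every $\mathfrak{a}_\lambda$, hence lies in no $\mathcal{O}(\mathfrak{a}_\lambda)$, contradicting that the $\mathcal{O}(\mathfrak{a}_\lambda)$ cover $\mathtt{Prim}(R)$. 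So $\mathfrak{a}=R$.

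Here is where unitality enters decisively, and I expect this to be the main obstacle. Since $R$ is unital, $1\in R=\mathfrak{a}=\sum_{\lambda\in\Lambda}\mathfrak{a}_\lambda$, and by the description of the sum in Proposition 2.1(ii) the element $1$ lies in a hypersum involving only finitely many indices $\lambda_1,\dots,\lambda_n$. I then want to conclude that $\sum_{i=1}^{n}\mathfrak{a}_{\lambda_i}=R$, which follows because this finite subsum is a hyperideal containing $1$ and hence equals $R$ by the standard argument that any hyperideal containing the identity is the whole hyperring. The delicate point to check carefully is that in the hyperstructure setting the membership $1\in\sum_\lambda\mathfrak{a}_\lambda$ genuinely involves only finitely many summands (the hyperoperation is multivalued, so one must confirm the definition of the infinite hypersum is the union of finite hypersums), and that containing $1$ really does force a hyperideal to be all of $R$.

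Finally I would close the argument: since $\sum_{i=1}^{n}\mathfrak{a}_{\lambda_i}=R$, repeating the contrapositive of the covering claim shows that no primitive hyperideal can contain $R$, so $\{\mathcal{O}(\mathfrak{a}_{\lambda_i})\}_{i=1}^{n}$ already covers $\mathtt{Prim}(R)$. Concretely, for any $\mathfrak{p}\in\mathtt{Prim}(R)$ we have $\mathfrak{p}\neq R$, so $\mathfrak{p}\nsupseteq R=\sum_{i=1}^n\mathfrak{a}_{\lambda_i}$, whence $\mathfrak{p}\nsupseteq\mathfrak{a}_{\lambda_i}$ for at least one $i$ and thus $\mathfrak{p}\in\mathcal{O}(\mathfrak{a}_{\lambda_i})$. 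This yields a finite subcover and establishes compactness.
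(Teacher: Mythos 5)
Your proof is correct and follows essentially the same route as the paper's: both reduce compactness to the observation that the sum of the relevant hyperideals must equal $R$, then use $1\in R$ to locate finitely many summands and hence a finite subcover (equivalently, a finite empty intersection of closed sets). You phrase it via open covers where the paper uses the finite intersection property, and you additionally justify, via Lemma \ref{max} and Proposition \ref{maxi}, why the sum must be all of $R$ --- a step the paper asserts without argument.
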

\begin{proof}
Let  $\{C_{ \lambda}\}_{\lambda \in \Lambda}$ be a family of  closed sets of a hyperstructure space $\mathtt{Prim}(R)$   such that $\bigcap_{\lambda\in \Lambda}C_{ \lambda}=\emptyset.$ Then a primitive hyperideal $\mathfrak{p}\in \bigcap_{\lambda\in \Lambda}C_{ \lambda}$ if and only if  $ \mathfrak{p}\supseteq \sum_{\lambda \in \Lambda}\mathcal{K}_{ C_{\lambda}}.$ Since $\bigcap_{\lambda\in \Lambda}C_{ \lambda}=\emptyset,$ we must have  $ \sum_{\lambda \in \Lambda}\mathcal{K}_{ C_{\lambda}}=R.$ Then, in particular, we obtain $1=\sum_{i=1}^n\mathcal{K}_{ C_{\lambda_i}}$ for a suitable finite subset $\{\lambda_1, \ldots, \lambda_n\}$ of $\Lambda$. This in turn implies $\bigcap_{ i\,=1}^{ n}C_{ \lambda_i}=\emptyset,$ and hence $\mathtt{Prim}(R)$ is compact.  
\end{proof} 

Recall that a nonempty closed subset $C$ of a topological space $X$ is \emph{irreducible} if $C\neq C_1\cup C_2$ for any two proper closed subsets  $C_1, C_2$ of $C$. A maximal irreducible subset of a topological space $X$ is called an
\emph{irreducible component} of $X.$ A point $x$ in a closed subset $C$ is called a \emph{generic point} of $C$ if $C = \mathtt{Cl}(x).$  

\begin{lemma}\label{lemprime}
$\{\mathtt{Cl}(\mathfrak{p})\}_{\mathfrak{p}\in \mathtt{Prim}(R)}$ are the only irreducible closed subsets of a hyperstructure space $\mathtt{Prim}(R).$  
\end{lemma}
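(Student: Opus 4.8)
The plan is to prove the two inclusions separately: every $\mathtt{Cl}(\mathfrak{p})$ is an irreducible closed set, and every irreducible closed set has this form. The first direction is a general feature of closures of points. Fix $\mathfrak{p}\in\mathtt{Prim}(R)$; then $\mathtt{Cl}(\mathfrak{p})$ is closed and nonempty since $\mathcal{K}_{\{\mathfrak{p}\}}=\mathfrak{p}$ gives $\mathfrak{p}\in\mathtt{Cl}(\mathfrak{p})$. Suppose $\mathtt{Cl}(\mathfrak{p})=C_1\cup C_2$ with $C_1,C_2$ closed. Then $\mathfrak{p}$ lies in one of them, say $\mathfrak{p}\in C_1$; since $C_1$ is closed we have $\mathcal{K}_{C_1}\subseteq\mathfrak{p}$, so every $\mathfrak{q}\supseteq\mathfrak{p}$ also satisfies $\mathfrak{q}\supseteq\mathcal{K}_{C_1}$, giving $\mathtt{Cl}(\mathfrak{p})\subseteq C_1\subseteq\mathtt{Cl}(\mathfrak{p})$ and hence $C_1=\mathtt{Cl}(\mathfrak{p})$. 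Thus no decomposition into two proper closed pieces exists, and $\mathtt{Cl}(\mathfrak{p})$ is irreducible.

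For the converse, let $C$ be an irreducible closed set and set $\mathfrak{p}=\mathcal{K}_C=\bigcap_{\mathfrak{q}\in C}\mathfrak{q}$. This is the only possible generic point, because $\mathcal{K}_{\mathtt{Cl}(\mathfrak{p})}=\mathfrak{p}$. I would first show $\mathfrak{p}$ is prime. Given hyperideals $\mathfrak{a},\mathfrak{b}$ with $\mathfrak{a}\mathfrak{b}\subseteq\mathfrak{p}$, put $C_{\mathfrak{a}}=\{\mathfrak{q}\in C\mid\mathfrak{a}\subseteq\mathfrak{q}\}$ and $C_{\mathfrak{b}}=\{\mathfrak{q}\in C\mid\mathfrak{b}\subseteq\mathfrak{q}\}$. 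Each is closed: the set of primitive hyperideals containing a fixed hyperideal is closed (its closure is itself, by the description of $\mathtt{Cl}$), and we intersect with the closed set $C$. Every $\mathfrak{q}\in C$ satisfies $\mathfrak{q}\supseteq\mathfrak{p}\supseteq\mathfrak{a}\mathfrak{b}$ and is prime by Proposition \ref{prtpr}, so $\mathfrak{a}\subseteq\mathfrak{q}$ or $\mathfrak{b}\subseteq\mathfrak{q}$; hence $C=C_{\mathfrak{a}}\cup C_{\mathfrak{b}}$. Irreducibility forces $C=C_{\mathfrak{a}}$ or $C=C_{\mathfrak{b}}$, and intersecting over all $\mathfrak{q}\in C$ then yields $\mathfrak{a}\subseteq\mathfrak{p}$ or $\mathfrak{b}\subseteq\mathfrak{p}$. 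So $\mathfrak{p}$ is prime.

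It remains to identify $C$ with $\mathtt{Cl}(\mathfrak{p})$. Since $C$ is closed we have $C=\{\mathfrak{q}\in\mathtt{Prim}(R)\mid\mathfrak{q}\supseteq\mathfrak{p}\}$, so the equality $C=\mathtt{Cl}(\mathfrak{p})$ holds as soon as $\mathfrak{p}\in\mathtt{Prim}(R)$, equivalently $\mathfrak{p}\in C$; uniqueness of the generic point is then immediate from the $T_0$ property (Proposition \ref{t0a}). I expect the main obstacle to be precisely this last step. The argument above only delivers that $\mathfrak{p}=\mathcal{K}_C$ is \emph{prime}, whereas what is required is that it be \emph{primitive}, so that it is a genuine point of the space serving as the least element of $C$. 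Primality of an intersection of primitive hyperideals does not, by itself, force primitivity, so closing this gap is the delicate point and will likely need either a sharper argument showing that the kernel of an irreducible closed set lands in $\mathtt{Prim}(R)$, or an additional structural hypothesis on $R$ guaranteeing that $\mathtt{Prim}(R)$ has generic points.
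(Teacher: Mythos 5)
Your first paragraph and your primality argument are both correct, and your converse is essentially the contrapositive of the paper's own proof: the paper writes the irreducible closed set as $\mathtt{Cl}(\mathfrak{a})$ with $\mathfrak{a}=\mathcal{K}_C$, supposes $\mathfrak{a}\notin\mathtt{Prim}(R)$, produces hyperideals $\mathfrak{b},\mathfrak{c}$ with $\mathfrak{b}\mathfrak{c}\subseteq\mathfrak{a}$ but $\mathfrak{b}\nsubseteq\mathfrak{a}$, $\mathfrak{c}\nsubseteq\mathfrak{a}$, and decomposes $\mathtt{Cl}(\mathfrak{a})=\mathtt{Cl}(\langle\mathfrak{a},\mathfrak{b}\rangle)\cup\mathtt{Cl}(\langle\mathfrak{a},\mathfrak{c}\rangle)$ into two proper closed pieces. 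Your $C=C_{\mathfrak{a}}\cup C_{\mathfrak{b}}$ decomposition is the same device. Both arguments therefore establish exactly the same fact: the kernel $\mathcal{K}_C$ of an irreducible closed set is a \emph{prime} hyperideal.

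The gap you flag at the end is genuine, and it is precisely the point at which the paper's proof is also unjustified: the paper's opening move, ``$\mathfrak{a}\notin\mathtt{Prim}(R)$ implies there exist $\mathfrak{b},\mathfrak{c}$ with $\mathfrak{b}\mathfrak{c}\subseteq\mathfrak{a}$ and neither contained in $\mathfrak{a}$,'' silently asserts that a non-primitive hyperideal cannot be prime, i.e.\ that every prime hyperideal (or at least every prime that is an intersection of primitives) is primitive. Proposition \ref{prtpr} gives only the reverse implication, and the assertion is false in general: any ordinary ring is a Krasner hyperring with singleton addition, and for $R=\mathbb{Z}$ one has $\mathtt{Prim}(\mathbb{Z})=\mathtt{Max}(\mathbb{Z})$ with the cofinite topology, so the whole space is an irreducible closed set whose kernel $(0)$ is prime but not primitive, and no $\mathtt{Cl}((p))=\{(p)\}$ equals it. So your argument proves everything that the paper's does, and your diagnosis is correct: without an additional hypothesis forcing such kernels into $\mathtt{Prim}(R)$, the statement itself (and the subsequent generic-point proposition) fails; this is not a defect of your proof relative to the paper's, but a gap common to both.
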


\begin{proof} 
Since $\{\mathfrak{p}\}$ is irreducible, so is  $\mathtt{Cl}(\mathfrak{p}).$  Suppose $\mathtt{Cl}(\mathfrak{a})$ is an irreducible closed subset of $\mathtt{Prim}(R)$ and $\mathfrak{a}\notin \mathtt{Prim}(R).$ This implies there exist hyperideals $\mathfrak{b}$ and $\mathfrak{c}$ of $R$ such that  $\mathfrak{b}\nsubseteq \mathfrak{a}$ and $\mathfrak{c}\nsubseteq \mathfrak{a}$, but $\mathfrak{b}\mathfrak{c}\subseteq \mathfrak{a}$. Then   
$$\mathtt{Cl}(\langle \mathfrak{a}, \mathfrak{b}\rangle)\cup \mathtt{Cl}(\langle \mathfrak{a},\mathfrak{c}\rangle)=\mathtt{Cl}(\langle \mathfrak{a},  \mathfrak{b}\mathfrak{c}\rangle)=\mathtt{Cl}(\mathfrak{a}).$$
But $\mathtt{Cl}(\langle \mathfrak{a},  \mathfrak{b}\rangle)\neq \mathtt{Cl}(\mathfrak{a})$ and $\mathtt{Cl}(\langle \mathfrak{a},  \mathfrak{c}\rangle)\neq \mathtt{Cl}(\mathfrak{a}),$ and hence $\mathtt{Cl}(\mathfrak{a})$ is not irreducible.
\end{proof}

\begin{proposition}
Every irreducible closed subset of $\mathtt{Prim}(R)$ has a unique generic point.
\end{proposition}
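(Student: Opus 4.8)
The plan is to combine the structural result of Lemma~\ref{lemprime} with the separation property of Proposition~\ref{t0a}. By Lemma~\ref{lemprime}, every irreducible closed subset of $\mathtt{Prim}(R)$ has the form $\mathtt{Cl}(\mathfrak{p})$ for some $\mathfrak{p}\in\mathtt{Prim}(R)$. The point $\mathfrak{p}$ is then, by the very definition of a generic point, a generic point of $\mathtt{Cl}(\mathfrak{p})$, since $\mathtt{Cl}(\mathfrak{p})=\mathtt{Cl}(\mathfrak{p})$. So existence is immediate from the lemma, and the entire content of the proposition lies in \emph{uniqueness}.

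For uniqueness, let $C$ be an irreducible closed subset and suppose $\mathfrak{p}$ and $\mathfrak{p}'$ are both generic points of $C$. By definition this means $C=\mathtt{Cl}(\mathfrak{p})$ and $C=\mathtt{Cl}(\mathfrak{p}')$, so in particular $\mathtt{Cl}(\mathfrak{p})=\mathtt{Cl}(\mathfrak{p}')$. Now I would invoke the observation recorded just before Proposition~\ref{t0a}: it follows directly from the closure formula (\ref{clop}) that if $\mathfrak{p}\neq\mathfrak{p}'$ then $\mathtt{Cl}(\mathfrak{p})\neq\mathtt{Cl}(\mathfrak{p}')$. Contrapositively, $\mathtt{Cl}(\mathfrak{p})=\mathtt{Cl}(\mathfrak{p}')$ forces $\mathfrak{p}=\mathfrak{p}'$, which establishes uniqueness.

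If one prefers an argument that does not merely cite the pre-stated remark, I would unpack that remark explicitly. Since $\mathfrak{p}\in\mathtt{Cl}(\mathfrak{p})=\mathtt{Cl}(\mathfrak{p}')$, the closure formula gives $\mathfrak{p}\supseteq\mathcal{K}_{\{\mathfrak{p}'\}}=\mathfrak{p}'$. By the symmetric argument, starting from $\mathfrak{p}'\in\mathtt{Cl}(\mathfrak{p}')=\mathtt{Cl}(\mathfrak{p})$, we get $\mathfrak{p}'\supseteq\mathfrak{p}$. The two containments together yield $\mathfrak{p}=\mathfrak{p}'$. This is really just a restatement of the fact that the Jacobson topology is $T_0$ (Proposition~\ref{t0a}), since in any $T_0$-space a generic point of a given set is unique whenever it exists.

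I do not anticipate any serious obstacle here: the heavy lifting has already been done in Lemma~\ref{lemprime}, which supplies existence, and in the $T_0$ property, which supplies uniqueness. The only point requiring care is the bookkeeping between ``$\mathfrak{p}$ is a generic point'' (a statement about closures being equal) and ``$\mathfrak{p}\supseteq\mathcal{K}_{\{\mathfrak{p}'\}}$'' (a statement about membership), but this is an immediate unwinding of the definition (\ref{clop}) of the closure operator and presents no genuine difficulty.
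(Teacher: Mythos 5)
Your proof is correct and follows exactly the paper's route: existence from Lemma~\ref{lemprime} and uniqueness from the $T_0$ property of Proposition~\ref{t0a}. Your explicit unwinding of the $T_0$ argument via the two containments $\mathfrak{p}\supseteq\mathfrak{p}'$ and $\mathfrak{p}'\supseteq\mathfrak{p}$ is a welcome elaboration of what the paper leaves implicit, but it is not a different approach.
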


\begin{proof}
The existence of a generic point follows from Lemma \ref{lemprime}, and the uniqueness of such a point follows from Proposition \ref{t0a}. 
\end{proof} 

The irreducible components of a hyperstructure space can be characterised in terms of minimal primitive hyperideals, and we have that in the following result.

\begin{proposition}\label{thmirre}
The irreducible components of a hyperstructure space $\mathtt{Prim}(R)$ are the closed sets $\mathtt{Cl}(\mathfrak{p})$, where $\mathfrak{p}$ is a minimal primitive hyperideal of $R$. 
\end{proposition}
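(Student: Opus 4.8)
The plan is to translate the notion of an irreducible component into an order-theoretic condition on primitive hyperideals, using the explicit form of the closure operator. An irreducible component is a maximal irreducible subset, and since the closure of an irreducible set is again irreducible, every such maximal subset is automatically closed. Thus by Lemma~\ref{lemprime}, which tells us that the irreducible closed subsets of $\mathtt{Prim}(R)$ are precisely the sets $\mathtt{Cl}(\mathfrak{p})$ with $\mathfrak{p}\in\mathtt{Prim}(R)$, an irreducible component is exactly a set of the form $\mathtt{Cl}(\mathfrak{p})$ that is inclusion-maximal among all such sets. The whole task therefore reduces to deciding when $\mathtt{Cl}(\mathfrak{p})$ is maximal in the family $\{\mathtt{Cl}(\mathfrak{q})\}_{\mathfrak{q}\in\mathtt{Prim}(R)}$.

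First I would record the key order-reversing correspondence. For a single point $S=\{\mathfrak{p}\}$ one has $\mathcal{K}_{\{\mathfrak{p}\}}=\mathfrak{p}$, so the closure in (\ref{clop}) specializes to $\mathtt{Cl}(\mathfrak{p})=\{\mathfrak{q}\in\mathtt{Prim}(R)\mid \mathfrak{q}\supseteq\mathfrak{p}\}$. I would then show that for $\mathfrak{p},\mathfrak{p}'\in\mathtt{Prim}(R)$ the inclusion $\mathtt{Cl}(\mathfrak{p})\subseteq\mathtt{Cl}(\mathfrak{p}')$ holds if and only if $\mathfrak{p}\supseteq\mathfrak{p}'$. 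The forward direction is immediate, since any $\mathfrak{q}$ containing $\mathfrak{p}$ also contains $\mathfrak{p}'$. For the converse I would use that $\mathfrak{p}\in\mathtt{Cl}(\mathfrak{p})$ (because $\mathfrak{p}\supseteq\mathfrak{p}$), so the inclusion of closures forces $\mathfrak{p}\in\mathtt{Cl}(\mathfrak{p}')$, that is, $\mathfrak{p}\supseteq\mathfrak{p}'$.

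With this equivalence in hand the conclusion is purely formal. The set $\mathtt{Cl}(\mathfrak{p})$ is maximal in the family precisely when there is no $\mathfrak{p}'\in\mathtt{Prim}(R)$ with $\mathtt{Cl}(\mathfrak{p})\subsetneq\mathtt{Cl}(\mathfrak{p}')$; by the correspondence this is the same as saying there is no $\mathfrak{p}'\in\mathtt{Prim}(R)$ with $\mathfrak{p}\supsetneq\mathfrak{p}'$, which is exactly the statement that $\mathfrak{p}$ is a minimal primitive hyperideal. Combining this with the reduction of the first paragraph identifies the irreducible components as the sets $\mathtt{Cl}(\mathfrak{p})$ with $\mathfrak{p}$ minimal in $\mathtt{Prim}(R)$, as claimed.

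I do not expect a serious obstacle, since the argument is essentially an order-duality once Lemma~\ref{lemprime} is available. The only points needing care are the direction of the correspondence—it is inclusion-reversing, so maximal closed sets match \emph{minimal} rather than maximal hyperideals—and the small verification that $\mathfrak{p}$ itself lies in $\mathtt{Cl}(\mathfrak{p})$, which is what drives the converse of the correspondence. If one additionally wished to guarantee that the irreducible components cover $\mathtt{Prim}(R)$, one could invoke Zorn's lemma to embed each $\mathtt{Cl}(\mathfrak{p})$ in a maximal one, but this is not needed for the stated characterization.
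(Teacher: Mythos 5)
Your proposal is correct and follows essentially the same route as the paper: both rest on Lemma~\ref{lemprime} together with the inclusion-reversing correspondence $\mathtt{Cl}(\mathfrak{p})\subseteq\mathtt{Cl}(\mathfrak{p}')\Leftrightarrow\mathfrak{p}\supseteq\mathfrak{p}'$, so that maximal closed irreducible sets match minimal primitive hyperideals. If anything, you are slightly more complete than the paper, which only argues explicitly that a minimal $\mathfrak{p}$ yields a component (by contradiction), whereas you also spell out the converse direction of the characterization.
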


\begin{proof}
If $\mathfrak{p}$ is a minimal primitive hyperideal, then by Lemma \ref{lemprime}, $\mathtt{Cl}(\mathfrak{p})$ is irreducible. If $\mathtt{Cl}(\mathfrak{p})$ is not a maximal irreducible subset of $\mathtt{Prim}(S)$, then there exists a maximal irreducible subset $\mathtt{Cl}(\mathfrak{p}')$ with $\mathfrak{p}'\in  \mathtt{Prim}(S)$ such that $\mathtt{Cl}(\mathfrak{p})\subsetneq \mathtt{Cl}(\mathfrak{p}')$. This implies that $\mathfrak{p}\in \mathtt{Cl}(\mathfrak{p}')$ and hence $\mathfrak{p}'\subsetneq \mathfrak{p}$, contradicting the minimality property of $\mathfrak{p}$.
\end{proof}

Recall that a hyperring is called \emph{Noetherian} if it satisfies the ascending
chain condition, whereas a topological space $X$ is called \emph{Noetherian} if the descending chain condition holds for closed subsets of $X.$ A relation between these two notions is shown in the following.

\begin{proposition}\label{fwn}
If a hyperring $R$ is Noetherian, then $\mathtt{Prim}(R)$ is a Noetherian hyperstructure space.  
\end{proposition}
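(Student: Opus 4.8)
The plan is to translate the ascending chain condition on hyperideals of $R$ into the descending chain condition on closed subsets of $\mathtt{Prim}(R)$, using the order-reversing relationship between closed sets and their associated kernels. The key observation is that the closure operator is determined entirely by the kernel: by definition $\mathtt{Cl}(S) = \{\mathfrak{p} \in \mathtt{Prim}(R) \mid \mathfrak{p} \supseteq \mathcal{K}_S\}$, and any closed set $C$ satisfies $C = \mathtt{Cl}(C)$, hence $C$ is recovered from its kernel $\mathcal{K}_C = \bigcap_{\mathfrak{q} \in C} \mathfrak{q}$.

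First I would take a descending chain of closed subsets $C_1 \supseteq C_2 \supseteq C_3 \supseteq \cdots$ of $\mathtt{Prim}(R)$ and pass to the corresponding kernels. Since each $C_i$ is closed, we have $C_i = \mathtt{Cl}(C_i) = \{\mathfrak{p} \mid \mathfrak{p} \supseteq \mathcal{K}_{C_i}\}$. The next step is to verify that $C_i \supseteq C_{i+1}$ forces $\mathcal{K}_{C_i} \subseteq \mathcal{K}_{C_{i+1}}$, which is immediate because an intersection over a larger index set is contained in the intersection over a smaller one. This produces an \emph{ascending} chain of hyperideals
\begin{equation*}
\mathcal{K}_{C_1} \subseteq \mathcal{K}_{C_2} \subseteq \mathcal{K}_{C_3} \subseteq \cdots
\end{equation*}
in the hyperring $R$.

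Now I would invoke the Noetherian hypothesis on $R$: the ascending chain $\{\mathcal{K}_{C_i}\}$ stabilises, so there exists $N$ with $\mathcal{K}_{C_i} = \mathcal{K}_{C_N}$ for all $i \geq N$. The final step is to push this stabilisation back to the closed sets. Since each $C_i$ is the closure of its own kernel, equality of kernels yields equality of closed sets: for $i \geq N$ we get $C_i = \{\mathfrak{p} \mid \mathfrak{p} \supseteq \mathcal{K}_{C_i}\} = \{\mathfrak{p} \mid \mathfrak{p} \supseteq \mathcal{K}_{C_N}\} = C_N$. Hence the descending chain of closed sets stabilises, and $\mathtt{Prim}(R)$ is Noetherian.

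The main subtlety to be careful about is the step $C_i = \mathtt{Cl}(\mathcal{K}_{C_i})$, namely that a closed set is honestly reconstructed from its kernel; this is exactly where one uses idempotence of the closure operator (Proposition \ref{ztp}(\ref{clclx})) together with the defining formula (\ref{clop}), and it is what makes the correspondence $C \mapsto \mathcal{K}_C$ injective on closed sets. Beyond that the argument is a routine order-reversal, so I do not anticipate a genuine obstacle; the only thing worth stating cleanly is that the Noetherian condition on $R$ applies to the kernels $\mathcal{K}_{C_i}$ precisely because each is a hyperideal of $R$, being an intersection of (primitive, hence) hyperideals by Proposition \ref{prtpr} and the first Proposition of Section 2.
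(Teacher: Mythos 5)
Your proposal is correct and takes essentially the same route as the paper: both convert the descending chain of closed sets into an ascending chain of hyperideals and invoke the Noetherian hypothesis. In fact your version is the more careful one — by working with the canonical kernels $\mathcal{K}_{C_i}$ and using $C_i=\mathtt{Cl}(C_i)=\{\mathfrak{p}\mid\mathfrak{p}\supseteq\mathcal{K}_{C_i}\}$, you justify both the reversal of inclusions and the recovery of the closed sets from the stabilised ideals, whereas the paper writes the chain as $\mathtt{Cl}(\mathfrak{a}_1)\supseteq\mathtt{Cl}(\mathfrak{a}_2)\supseteq\cdots$ for unspecified hyperideals $\mathfrak{a}_i$ and asserts $\mathfrak{a}_1\subseteq\mathfrak{a}_2\subseteq\cdots$, an implication that holds only for the canonical choice of generating hyperideal that you make explicit.
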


\begin{proof}
It suffices to show that a collection of closed sets in $\mathtt{Prim}(R)$ satisfy the descending chain condition. Let $\mathtt{Cl}(\mathfrak{a}_1)\supseteq \mathtt{Cl}(\mathfrak{a}_2)\supseteq \cdots$
be a descending chain of closed sets in $\mathtt{Prim}(R)$. Then, $\mathfrak{a}_1\subseteq \mathfrak{a}_2\subseteq \cdots$ is an ascending chain of hyperideals in $R.$ Since the hyperring $R$ is Noetherian, the chain stabilizes at some $n \in \mathbb{N}.$ Hence, $\mathtt{Cl}(\mathfrak{a}_n) = \mathtt{Cl}(\mathfrak{a}_{n+k})$ for any $k.$ Thus $\mathtt{Prim}(R)$ is Noetherian.
\end{proof}
\medskip

\begin{corollary}
The set of minimal primitive hyperideals in a Noetherian hyperring is finite.
\end{corollary}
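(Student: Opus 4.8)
The plan is to deduce finiteness from the two preceding results together with the standard fact that a Noetherian topological space has only finitely many irreducible components. By Proposition~\ref{fwn}, since $R$ is Noetherian the hyperstructure space $\mathtt{Prim}(R)$ is a Noetherian topological space, and by Proposition~\ref{thmirre} the irreducible components of $\mathtt{Prim}(R)$ are precisely the closed sets $\mathtt{Cl}(\mathfrak{p})$ with $\mathfrak{p}$ a minimal primitive hyperideal. Moreover the remark preceding Proposition~\ref{t0a} shows that $\mathfrak{p}\mapsto\mathtt{Cl}(\mathfrak{p})$ is injective, so the minimal primitive hyperideals of $R$ correspond bijectively to the irreducible components of $\mathtt{Prim}(R)$. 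Hence it suffices to prove that $\mathtt{Prim}(R)$ has only finitely many irreducible components.

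First I would show, by Noetherian induction, that the whole space is a finite union of irreducible closed subsets. Let $\Sigma$ be the collection of closed subsets of $\mathtt{Prim}(R)$ that cannot be written as a finite union of irreducible closed subsets. If $\Sigma\neq\emptyset$, then since the descending chain condition holds for closed sets, any nonempty family of closed sets has a minimal element, so $\Sigma$ has a minimal element $Y$. Such a $Y$ is not itself irreducible, for otherwise it would be its own one-term decomposition; hence $Y=Y_1\cup Y_2$ for proper closed subsets $Y_1,Y_2\subsetneq Y$. By minimality neither $Y_1$ nor $Y_2$ lies in $\Sigma$, so each is a finite union of irreducible closed sets, and therefore so is $Y$, a contradiction. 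Thus $\Sigma=\emptyset$, and applying this to $\mathtt{Prim}(R)$ itself gives $\mathtt{Prim}(R)=C_1\cup\cdots\cup C_n$ with each $C_i$ irreducible and closed.

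Next I would bound the number of irreducible components using this decomposition. If $Z$ is any irreducible component, then $Z=\bigcup_i(Z\cap C_i)$, and since each $Z\cap C_i$ is closed in $Z$ and $Z$ is irreducible, we must have $Z\subseteq C_i$ for some $i$; as $Z$ is maximal among irreducible closed sets while $C_i$ is itself irreducible and closed, this forces $Z=C_i$. Consequently every irreducible component coincides with one of the finitely many $C_i$, so $\mathtt{Prim}(R)$ has at most $n$ irreducible components. Invoking the bijection recorded in the first paragraph, the set of minimal primitive hyperideals of $R$ is therefore finite.

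The argument is essentially formal once Propositions~\ref{fwn} and~\ref{thmirre} are available, and I do not anticipate a genuine obstacle. The only point requiring care is the Noetherian induction: one must explicitly use the equivalence, for the closed sets of $\mathtt{Prim}(R)$, between the descending chain condition and the existence of a minimal element in every nonempty family, and check that this topological machinery applies verbatim to the hyperstructure space rather than to a classical spectrum.
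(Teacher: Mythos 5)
Your proposal is correct and follows essentially the same route as the paper: reduce to counting irreducible components via Proposition~\ref{fwn} and Proposition~\ref{thmirre}, then use that a Noetherian space has finitely many irreducible components. The only difference is that you supply the Noetherian-induction proof of that last topological fact (and explicitly record the injectivity of $\mathfrak{p}\mapsto\mathtt{Cl}(\mathfrak{p})$ from the $T_0$ property), whereas the paper simply invokes it.
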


\begin{proof}
By Proposition \ref{fwn}, $\mathtt{Prim}(R)$ is Noetherian, thus $\mathtt{Prim}(R)$ has a finitely many irreducible
components. By Proposition \ref{thmirre}, every irreducible closed subset of $\mathtt{Prim}(R)$
is of the form $\mathtt{Cl}(\mathfrak{p}),$ where $\mathfrak{p}$ is a minimal primitive hyperideal. Thus $\mathtt{Cl}(\mathfrak{p})$ is an irreducible component if and only if $\mathfrak{p}$ is a minimal primitive hyperideal. Hence, $R$ has only finitely many minimal primitive hyperideals.
\end{proof}

In general, a hyperstructure space is not $T_1$. However, with added restriction we can characterize such spaces. 

\begin{theorem}
An hyperstructure space $\mathtt{Prim}(R)$ is a $T_1$-hyperstructure space if and only if $\mathtt{Prim}(R)$ coincides with the set $\mathtt{Max}(R)$ of maximal hyperideals of $R$. 
\end{theorem}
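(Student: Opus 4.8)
The plan is to unwind the $T_1$ condition through the closure operator and then match the resulting order-theoretic condition on primitive hyperideals against maximality. First I would recall that a topological space is $T_1$ precisely when every singleton is closed, and compute directly from the definition (\ref{clop}) that for $\mathfrak{p}\in\mathtt{Prim}(R)$ one has $\mathcal{K}_{\{\mathfrak{p}\}}=\mathfrak{p}$, hence
\[
\mathtt{Cl}(\mathfrak{p})=\{\mathfrak{q}\in\mathtt{Prim}(R)\mid \mathfrak{q}\supseteq\mathfrak{p}\}.
\]
Consequently $\{\mathfrak{p}\}$ is closed if and only if no primitive hyperideal properly contains $\mathfrak{p}$; that is, $\mathtt{Prim}(R)$ is $T_1$ if and only if every primitive hyperideal is maximal \emph{among primitive hyperideals}. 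The whole proof then reduces to showing this last condition is equivalent to $\mathtt{Prim}(R)=\mathtt{Max}(R)$.

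For the direction $\mathtt{Prim}(R)=\mathtt{Max}(R)\Rightarrow T_1$, I would take any $\mathfrak{p}\in\mathtt{Prim}(R)=\mathtt{Max}(R)$ and any $\mathfrak{q}\in\mathtt{Cl}(\mathfrak{p})$. Then $\mathfrak{q}$ is a primitive, hence proper, hyperideal containing the maximal hyperideal $\mathfrak{p}$, so by the definition of maximality $\mathfrak{q}=\mathfrak{p}$. Thus $\mathtt{Cl}(\mathfrak{p})=\{\mathfrak{p}\}$, every singleton is closed, and the space is $T_1$.

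For the converse $T_1\Rightarrow \mathtt{Prim}(R)=\mathtt{Max}(R)$, one inclusion is immediate, since Proposition \ref{maxi} gives $\mathtt{Max}(R)\subseteq\mathtt{Prim}(R)$. For the reverse inclusion I would argue by contradiction. Let $\mathfrak{p}\in\mathtt{Prim}(R)$ and suppose $\mathfrak{p}$ is \emph{not} maximal, so there is a proper hyperideal $\mathfrak{a}$ with $\mathfrak{p}\subsetneq\mathfrak{a}\subsetneq R$. By Lemma \ref{max}, $\mathfrak{a}$ is contained in a maximal hyperideal $\mathfrak{m}$, whence $\mathfrak{p}\subsetneq\mathfrak{m}$; and by Proposition \ref{maxi}, $\mathfrak{m}$ is primitive. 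Then $\mathfrak{m}\in\mathtt{Cl}(\mathfrak{p})\setminus\{\mathfrak{p}\}$, contradicting the fact that $\{\mathfrak{p}\}$ is closed under the $T_1$ hypothesis. Hence every primitive hyperideal is maximal, giving $\mathtt{Prim}(R)\subseteq\mathtt{Max}(R)$ and therefore equality.

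The main obstacle is the converse direction, and more precisely the step that upgrades ``maximal among primitive hyperideals'' (which is all that $T_1$ supplies directly) to ``maximal among all hyperideals.'' This is exactly where unitality of $R$ is essential: it is what makes Lemma \ref{max} available to produce a maximal hyperideal above any proper hyperideal, and Proposition \ref{maxi} available to ensure that this maximal hyperideal is itself primitive and hence lies in $\mathtt{Cl}(\mathfrak{p})$. Without these two inputs the closure $\mathtt{Cl}(\mathfrak{p})$ could fail to detect non-maximality of $\mathfrak{p}$, so I would flag that the theorem is being read under the standing assumption, used throughout this section, that $R$ is unital.
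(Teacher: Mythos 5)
Your proof is correct and follows essentially the same route as the paper: both directions reduce to comparing $\mathtt{Cl}(\mathfrak{p})=\{\mathfrak{q}\in\mathtt{Prim}(R)\mid\mathfrak{q}\supseteq\mathfrak{p}\}$ with singletons, and the key step $T_1\Rightarrow\mathtt{Prim}(R)\subseteq\mathtt{Max}(R)$ uses exactly the paper's device of placing a maximal (hence, by Proposition \ref{maxi}, primitive) hyperideal above $\mathfrak{p}$ inside $\mathtt{Cl}(\mathfrak{p})$. Your explicit remark that unitality is needed for Lemma \ref{max} and Proposition \ref{maxi} is a worthwhile clarification the paper leaves implicit, but it does not change the argument.
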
  

\begin{proof} 
By Proposition \ref{maxi}, $\mathtt{Max}(R)\subseteq \mathtt{Prim}(R).$ So, it is sufficient to prove the result for the other inclusion. Let $\mathfrak{a}\in \mathtt{Prim}(R)$. Then $\mathfrak{a}\in\mathtt{Cl}(\mathfrak{a})$. Let $\mathfrak{m}$ be a maximal hyperideal with $\mathfrak{a}\subseteq\mathfrak{m}$. Then   $$\mathfrak{m}\in 	\mathtt{Cl}(\mathfrak{a})= \{\mathfrak{a}\},$$where the equality follows from $\mathtt{Prim}(R)$ being a $T_{ 1}$-space. Therefore $\mathfrak{m}=\mathfrak{a}$, showing that $\mathtt{Prim}(R)\subseteq \mathtt{Max}(R)$.  	
Conversely, in $\mathtt{Max}(R)$, $\mathtt{Cl}(\mathfrak{m})=\{\mathfrak{m}\}$ for every maximal hyperideal $\mathfrak{m}$, so that $\mathfrak{m}\in \mathtt{Cl}(\mathfrak{m})$, showing that the hyperstructure space is $T_{ 1}$.
\end{proof} 

A strong hyperring homomorphism induces a continuous map between corresponding hyperstructure spaces. We now study this continuity and homeomorphisms between such spaces.

\begin{proposition}\label{conmap}
Suppose $\phi\colon R\to R'$ is a strong hyperring  homomorphism and define the map $\phi_*\colon  \mathtt{Prim}(R')\to \mathtt{Prim}(R)$ by  $\phi_*(\mathfrak{p})=\phi\inv(\mathfrak{p})$, where $\mathfrak{p}\in\mathtt{Prim}(R').$ Then $\phi_*$ is a continuous map.
\end{proposition}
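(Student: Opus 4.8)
The plan is to establish continuity through the closure-preservation criterion: a map between topological spaces is continuous if and only if it sends closures into closures. Concretely, $\phi_*$ is continuous precisely when $\phi_*(\mathtt{Cl}(T))\subseteq \mathtt{Cl}(\phi_*(T))$ for every subset $T\subseteq \mathtt{Prim}(R')$, where the left-hand closure is computed in $\mathtt{Prim}(R')$ and the right-hand one in $\mathtt{Prim}(R)$, both via the operator in (\ref{clop}). The entire topological argument will rest on two elementary properties of the set-theoretic preimage $\phi\inv$: it preserves inclusions, and it commutes with arbitrary intersections of subsets.

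Carrying this out, I would fix $T$ and take $\mathfrak{q}\in \mathtt{Cl}(T)$, so that $\mathfrak{q}\supseteq \mathcal{K}_T=\bigcap_{\mathfrak{p}\in T}\mathfrak{p}$ by the definition of the closure. Applying $\phi\inv$ and using the two properties above gives
$$\phi\inv(\mathfrak{q})\supseteq \phi\inv(\mathcal{K}_T)=\bigcap_{\mathfrak{p}\in T}\phi\inv(\mathfrak{p})=\mathcal{K}_{\phi_*(T)},$$
so $\phi_*(\mathfrak{q})=\phi\inv(\mathfrak{q})$ contains $\mathcal{K}_{\phi_*(T)}$, which by (\ref{clop}) means exactly that $\phi_*(\mathfrak{q})\in \mathtt{Cl}(\phi_*(T))$. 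This yields the required inclusion and hence continuity. Equivalently, one could argue directly on closed sets: every closed subset of $\mathtt{Prim}(R)$ has the form $\{\mathfrak{a}\mid \mathfrak{a}\supseteq \mathfrak{b}\}$ for a hyperideal $\mathfrak{b}=\mathcal{K}_S$, and its $\phi_*$-preimage is $\{\mathfrak{p}\in \mathtt{Prim}(R')\mid \phi(\mathfrak{b})\subseteq \mathfrak{p}\}=\{\mathfrak{p}\mid \langle \phi(\mathfrak{b})\rangle \subseteq \mathfrak{p}\}$, which is again closed.

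The genuinely delicate step is not the topology but the \emph{well-definedness} of $\phi_*$, namely that $\phi\inv(\mathfrak{p})\in \mathtt{Prim}(R)$ whenever $\mathfrak{p}\in \mathtt{Prim}(R')$, and this is where I expect the main obstacle to lie. That $\phi\inv(\mathfrak{p})$ is a proper two-sided hyperideal of $R$ is routine; its primitivity is the crux. Writing $\mathfrak{p}=\mathtt{Ann}_{R'}(M')$ for a simple $R'$-hypermodule $M'$ and pulling the action back along $\phi$ (via $m\cdot r:=m\cdot\phi(r)$) turns $M'$ into an $R$-hypermodule with $\mathtt{Ann}_R(M')=\phi\inv(\mathfrak{p})$. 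This annihilator is primitive as soon as $M'$ remains simple over $R$, which is guaranteed when $\phi$ is surjective, since then $\phi(R)=R'$ forces the $R$-subhypermodules and the $R'$-subhypermodules of $M'$ to coincide. I would therefore either impose surjectivity or isolate the precise hypothesis under which the codomain $\mathtt{Prim}(R)$ is legitimate; the continuity computation above is insensitive to this choice and goes through verbatim once the map is known to be well-defined.
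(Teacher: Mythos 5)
Your argument is correct and, at bottom, follows the same route as the paper: both reduce the problem to (a) well-definedness of $\phi_*$ and (b) the fact that $\phi_*$-preimages of closed sets are closed. For (b) the paper computes directly that $\phi_*\inv(\mathtt{Cl}(\mathfrak{a}))=\mathtt{Cl}(\langle\phi(\mathfrak{a})\rangle)$, which is exactly your second formulation; your first formulation via the closure-preservation criterion $\phi_*(\mathtt{Cl}(T))\subseteq\mathtt{Cl}(\phi_*(T))$ is an equivalent repackaging resting on the same two facts about $\phi\inv$ (monotonicity and commutation with intersections), so nothing is lost or gained there. The substantive difference is your treatment of (a): the paper simply asserts, ``by the change of hyperrings property,'' that the pullback of the simple $R'$-hypermodule $M$ along $\phi$ is a simple $R$-hypermodule with annihilator $\phi\inv(\mathfrak{p})$, and it imposes no surjectivity hypothesis. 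You are right to be suspicious: the $R$-subhypermodules of the pullback are precisely the $\phi(R)$-subhypermodules of $M$, so simplicity over $R$ is only guaranteed when $\phi$ is surjective (or $M$ remains generated by each nonzero element over $\phi(R)$). Already for ordinary rings, viewed as Krasner hyperrings, the inclusion $\mathbb{Z}\hookrightarrow\mathbb{Q}$ pulls the primitive ideal $0$ of $\mathbb{Q}$ back to the ideal $0$ of $\mathbb{Z}$, which is not primitive. So your caveat does not reflect a gap in your proof but rather identifies one in the paper's: as stated, the proposition needs an additional hypothesis (surjectivity of $\phi$, as you propose) for $\phi_*$ to land in $\mathtt{Prim}(R)$ at all, after which your continuity computation goes through verbatim.
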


\begin{proof}
To show $\phi_*$ is continuous, we first show that $\phi\inv(\mathfrak{p})\in \mathtt{Prim}(R),$ whenever $\mathfrak{p}\in \mathtt{Prim}(R')$. Note that $\phi\inv(\mathfrak{p})$ is a hyperideal of $R$. Suppose $\mathfrak{p}=\mathtt{Ann}_{R'}(M)$ for some simple $R'$-hypermodule. Then by the ``change of hyperrings'' property of hypermodules, $\phi\inv(\mathfrak{p})$ is the annihilator of the simple $R'$-hypermodule $M$ obtained by defining $sm=\phi(s)m$. Therefore $\phi\inv(\mathfrak{p})\in \mathtt{Prim}(R)$. Now consider a closed subset $\mathtt{Cl}(\mathfrak{a})$ of  $\mathtt{Prim}(R).$ Then for any $\mathfrak{q}\in \mathtt{Prim}(R'),$ we have the following sequence of equivalent statements:
$$
\mathfrak{q}\in \phi_*\inv (\mathtt{Cl}(\mathfrak{a}))\Leftrightarrow \phi\inv(\mathfrak{q})\in \mathtt{Cl}(\mathfrak{a})\Leftrightarrow \mathfrak{a}\subseteq \phi\inv(\mathfrak{q})\Leftrightarrow \mathfrak{q}\in\mathtt{Cl}(\langle \phi(\mathfrak{a})\rangle),
$$
and that proves the desired continuity of $\phi_*$.
\end{proof}

\begin{proposition}
If $\mathfrak{a}$ is a hyperideal of the hyperring $R,$ then $\mathtt{Cl}(\mathfrak{a})$ is  homeomorphic  to the hyperstructure space $\mathtt{Prim}(R/\mathfrak{a}).$
\end{proposition}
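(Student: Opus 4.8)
The plan is to realize the homeomorphism through the canonical quotient map together with the functoriality already established in Proposition \ref{conmap}. Write $\pi\colon R\to R/\mathfrak{a}$ for the quotient map $r\mapsto \mathfrak{a}+r$; this is a strong hyperring homomorphism by the very definition of the operations on $R/\mathfrak{a}$, it is surjective, and $\ker(\pi)=\mathfrak{a}$. By Proposition \ref{conmap} the induced map $\pi_*\colon \mathtt{Prim}(R/\mathfrak{a})\to \mathtt{Prim}(R)$, $\mathfrak{p}\mapsto \pi\inv(\mathfrak{p})$, is well defined and continuous. I would first observe that the image of $\pi_*$ lies inside $\mathtt{Cl}(\mathfrak{a})=\{\mathfrak{q}\in\mathtt{Prim}(R)\mid \mathfrak{q}\supseteq \mathfrak{a}\}$, because $\pi\inv(\mathfrak{p})\supseteq \pi\inv(0)=\mathfrak{a}$ for every hyperideal $\mathfrak{p}$ of $R/\mathfrak{a}$. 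Thus $\pi_*$ corestricts to a continuous map $\mathtt{Prim}(R/\mathfrak{a})\to \mathtt{Cl}(\mathfrak{a})$, where $\mathtt{Cl}(\mathfrak{a})$ carries the subspace topology inherited from $\mathtt{Prim}(R)$.

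Next I would prove that this corestriction is a bijection, which is the heart of the argument. Injectivity is immediate from surjectivity of $\pi$, since $\pi(\pi\inv(\mathfrak{p}))=\mathfrak{p}$ recovers $\mathfrak{p}$ from $\pi\inv(\mathfrak{p})$. For surjectivity onto $\mathtt{Cl}(\mathfrak{a})$, take $\mathfrak{q}\in\mathtt{Prim}(R)$ with $\mathfrak{q}\supseteq\mathfrak{a}$ and write $\mathfrak{q}=\mathtt{Ann}_R(M)$ for a simple $R$-hypermodule $M$. Since $\mathfrak{a}\subseteq \mathtt{Ann}_R(M)$ forces $M\mathfrak{a}=0$, the canonical hypergroup $M$ becomes an $R/\mathfrak{a}$-hypermodule under $m(\mathfrak{a}+r)=mr$, and, exactly as in the proof that $\mathfrak{p}$ is primitive if and only if $R/\mathfrak{p}$ is a primitive hyperring, it remains simple because its lattice of subhypermodules is unchanged. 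A short computation then gives $\mathtt{Ann}_{R/\mathfrak{a}}(M)=\mathfrak{q}/\mathfrak{a}\in\mathtt{Prim}(R/\mathfrak{a})$ with $\pi_*(\mathfrak{q}/\mathfrak{a})=\pi\inv(\mathfrak{q}/\mathfrak{a})=\mathfrak{q}$, so $\mathfrak{q}$ is attained.

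Finally I would upgrade the continuous bijection to a homeomorphism by showing $\pi_*$ is a closed map onto $\mathtt{Cl}(\mathfrak{a})$; a continuous closed bijection automatically has continuous inverse. Every closed subset of $\mathtt{Prim}(R/\mathfrak{a})$ has the form $\{\mathfrak{p}\mid \mathfrak{p}\supseteq \mathfrak{b}/\mathfrak{a}\}$, since by definition any such set is $\{\mathfrak{p}\mid\mathfrak{p}\supseteq\mathcal{K}_S\}$ and the correspondence between hyperideals of $R/\mathfrak{a}$ and hyperideals of $R$ containing $\mathfrak{a}$ writes $\mathcal{K}_S$ as $\mathfrak{b}/\mathfrak{a}$ for a hyperideal $\mathfrak{b}\supseteq\mathfrak{a}$ of $R$. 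Under the bijection above, $\mathfrak{p}\supseteq \mathfrak{b}/\mathfrak{a}$ is equivalent to $\pi\inv(\mathfrak{p})\supseteq\mathfrak{b}$, so the image of this closed set is $\{\mathfrak{q}\in\mathtt{Cl}(\mathfrak{a})\mid \mathfrak{q}\supseteq \mathfrak{b}\}=\mathtt{Cl}(\mathfrak{b})$, which is closed in $\mathtt{Prim}(R)$ and hence in $\mathtt{Cl}(\mathfrak{a})$. Therefore $\pi_*$ is a continuous closed bijection, i.e. a homeomorphism of $\mathtt{Prim}(R/\mathfrak{a})$ onto $\mathtt{Cl}(\mathfrak{a})$.

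I expect the main obstacle to be the surjectivity step, and specifically the verification that primitivity transfers correctly across the quotient: one must check that turning a simple $R$-hypermodule $M$ with $M\mathfrak{a}=0$ into an $R/\mathfrak{a}$-hypermodule preserves simplicity and that its $R/\mathfrak{a}$-annihilator is exactly $\mathfrak{q}/\mathfrak{a}$. Once this order-preserving bijection between primitive hyperideals containing $\mathfrak{a}$ and primitive hyperideals of $R/\mathfrak{a}$ is in hand, the topological bookkeeping — the continuity coming for free from Proposition \ref{conmap} and the closedness from the computation $\pi_*(\{\mathfrak{p}\mid\mathfrak{p}\supseteq\mathfrak{b}/\mathfrak{a}\})=\mathtt{Cl}(\mathfrak{b})$ — is routine.
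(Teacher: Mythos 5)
Your proof is correct and follows essentially the same route as the paper: both transport primitive hyperideals of the quotient back along the surjective quotient map, invoke Proposition \ref{conmap} for continuity, and check injectivity and closedness to upgrade the bijection to a homeomorphism. The one substantive difference is in your favour: you actually prove surjectivity onto $\mathtt{Cl}(\mathfrak{a})$ --- by noting that a simple $R$-hypermodule $M$ with $M\mathfrak{a}=0$ becomes a simple $R/\mathfrak{a}$-hypermodule with annihilator $\mathfrak{q}/\mathfrak{a}$ --- whereas the paper asserts $\mathtt{im}(\phi^*)=\mathtt{Cl}(\mathtt{ker}(\phi))$ after establishing only the inclusion $\mathtt{im}(\phi^*)\subseteq\mathtt{Cl}(\mathtt{ker}(\phi))$, so your module-theoretic step supplies exactly the half of that equality the paper leaves unjustified.
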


\begin{proof}
We shall in fact prove more, viz, if $\phi\colon R\to R'$ is a strong hyperring homomorphism and if $\phi$ is  surjective, then the hyperstructure space $\mathtt{Prim}(R')$ is homeomorphic to the closed subset $\mathtt{Cl}(\mathtt{ker}(\phi))$ of the hyperstructure space $\mathtt{Prim}(R).$ The desired result will then follow by taking the quotient map $R\to R/\mathfrak{a}.$

Since  $\mathfrak{o}\subseteq \mathfrak{b}$ for all $\mathfrak{b}\in \mathtt{Prim}(R'),$ we have     
$\mathtt{ker}(\phi)\subseteq \phi\inv(\mathfrak{b}),$ or, in other words $f^*(\mathfrak{b})\in \mathtt{Cl}(\mathtt{ker}(\phi)).$ This implies that $\mathtt{im}(\phi^*)=\mathtt{Cl}(\mathtt{ker}(\phi)).$  
Since for all $\mathfrak{b}\in \mathtt{Prim}(R'),$ $\phi(\phi^*(\mathfrak{b}))=\phi(\phi\inv(\mathfrak{b}))=\mathfrak{b},$ the map $\phi^*$ is injective. To show that $\phi^*$ is a closed map, first we observe that for any closed subset  $\mathtt{Cl}(\mathfrak{a})$ of  $\mathtt{Prim}(R')$, we have: $$\phi^*(\mathtt{Cl}(\mathfrak{a}))=  \phi\inv(\mathtt{Cl}(\mathfrak{a}))=\phi\inv\{ \mathfrak{i}'\in \mathtt{Prim}(R')\mid \mathfrak{a}\subseteq   \mathfrak{i}'\}=\mathtt{Cl}(\phi\inv(\mathfrak{a})).$$

Now if $C$ is a closed subset of $\mathtt{Prim}(R')$ and $C=\mathtt{Cl}(\mathfrak{a}),$ then $\phi^*(C)=\phi\inv \left(\mathtt{Cl}(\mathfrak{a})\right)= \mathtt{Cl}(\phi\inv(\mathfrak{a})),$ a closed subset of  $\mathtt{Prim}(R).$ Since by Proposition \ref{conmap}, $\phi^*$ is   continuous, we have the desired claim.	
\end{proof}

\begin{corollary}
The hyperstructure spaces  $\mathtt{Prim}(R)$ and $\mathtt{Prim}(R)/\sqrt{\mathfrak{o}}$ are  homeomorphic, where $\sqrt{\mathfrak{o}}$ is the nil radical of $R$. 
\end{corollary}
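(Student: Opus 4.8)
The plan is to obtain this corollary directly from the preceding proposition by specializing the hyperideal $\mathfrak{a}$ to the nil radical $\sqrt{\mathfrak{o}}$, and then checking that the closed set it determines is the whole space. Reading $\mathtt{Prim}(R)/\sqrt{\mathfrak{o}}$ as $\mathtt{Prim}(R/\sqrt{\mathfrak{o}})$, consistent with the preceding proposition, I would first apply that proposition with $\mathfrak{a}=\sqrt{\mathfrak{o}}$ to conclude that the closed subset $\mathtt{Cl}(\sqrt{\mathfrak{o}})$ of $\mathtt{Prim}(R)$ is homeomorphic to $\mathtt{Prim}(R/\sqrt{\mathfrak{o}})$.

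The second and decisive step is to show that $\mathtt{Cl}(\sqrt{\mathfrak{o}})=\mathtt{Prim}(R)$. By the definition of the closure operator, $\mathtt{Cl}(\sqrt{\mathfrak{o}})=\{\mathfrak{p}\in\mathtt{Prim}(R)\mid \sqrt{\mathfrak{o}}\subseteq\mathfrak{p}\}$, so this amounts to showing that every primitive hyperideal contains the nil radical. By Proposition \ref{prtpr}, every primitive hyperideal is prime; and since the nil radical is precisely the intersection of all prime hyperideals of $R$, it is contained in each of them. Hence $\sqrt{\mathfrak{o}}\subseteq\mathfrak{p}$ for every $\mathfrak{p}\in\mathtt{Prim}(R)$, which gives the equality $\mathtt{Cl}(\sqrt{\mathfrak{o}})=\mathtt{Prim}(R)$.

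Combining the two steps yields $\mathtt{Prim}(R)=\mathtt{Cl}(\sqrt{\mathfrak{o}})\cong\mathtt{Prim}(R/\sqrt{\mathfrak{o}})$, which is the desired homeomorphism.

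The main obstacle is the containment of the nil radical in every prime hyperideal, which rests on the description of $\sqrt{\mathfrak{o}}$ as the intersection of all prime hyperideals. If instead the nil radical is taken to be the set of nilpotent elements of $R$, I would need the hyperring analogue of the classical fact that a nilpotent element lies in every prime ideal: for $a$ with $a^n=0$, the principal hyperideal $\langle a\rangle$ satisfies $\langle a\rangle^n\subseteq\mathfrak{o}\subseteq\mathfrak{p}$, and an induction using the primeness of $\mathfrak{p}$ forces $\langle a\rangle\subseteq\mathfrak{p}$, hence $a\in\mathfrak{p}$. Verifying that this argument goes through with hyperoperations, in particular that products of principal hyperideals behave as expected, is the one place where genuine care is needed; everything else is a formal consequence of the preceding proposition.
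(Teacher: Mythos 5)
Your proposal is correct and is exactly the argument the paper intends: the corollary is stated without proof immediately after the proposition that $\mathtt{Cl}(\mathfrak{a})$ is homeomorphic to $\mathtt{Prim}(R/\mathfrak{a})$, and the evident route is to set $\mathfrak{a}=\sqrt{\mathfrak{o}}$ and observe that $\mathtt{Cl}(\sqrt{\mathfrak{o}})=\mathtt{Prim}(R)$ because every primitive hyperideal is prime (Proposition \ref{prtpr}) and hence contains the nil radical. You also rightly flag the one substantive point the paper glosses over --- that $\sqrt{\mathfrak{o}}$ is never defined in the paper, so the containment in every prime hyperideal must be checked against whichever definition is adopted --- and your fallback argument for nilpotent elements is the right way to handle that.
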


\begin{proposition}
Let $\phi^*$ be as in Proposition \ref{conmap}. Then  $\phi^*(\mathtt{Prim}(R'))$ is dense in $\mathtt{Prim}(R)$ if and only if $\mathtt{ker}(\phi)\subseteq \sqrt{\mathfrak{o}}.$ 
\end{proposition}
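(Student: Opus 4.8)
The plan is to compute the closure of $\phi^*(\mathtt{Prim}(R'))$ explicitly and then translate the topological density condition into a containment of hyperideals. First I would record that $\phi^*$ lands where Proposition \ref{conmap} says it does: each $\phi^*(\mathfrak{q})=\phi\inv(\mathfrak{q})$ is again a primitive hyperideal of $R$, so $S:=\phi^*(\mathtt{Prim}(R'))$ is a genuine subset of $\mathtt{Prim}(R)$. The central computation is then
$$\mathcal{K}_S=\bigcap_{\mathfrak{q}\in\mathtt{Prim}(R')}\phi\inv(\mathfrak{q})=\phi\inv\Bigl(\bigcap_{\mathfrak{q}\in\mathtt{Prim}(R')}\mathfrak{q}\Bigr)=\phi\inv(\sqrt{\mathfrak{o}'}),$$
where I use that taking preimages commutes with arbitrary intersections and that the nil radical $\sqrt{\mathfrak{o}'}$ is exactly the intersection of all primitive hyperideals of $R'$ (the identity underlying the corollary preceding this proposition).

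Next I would unwind the density condition. By the definition of the Jacobson topology in (\ref{clop}), $S$ is dense precisely when $\mathtt{Cl}(S)=\mathtt{Prim}(R)$, i.e. when every $\mathfrak{p}\in\mathtt{Prim}(R)$ satisfies $\mathfrak{p}\supseteq\mathcal{K}_S=\phi\inv(\sqrt{\mathfrak{o}'})$. Intersecting over all such $\mathfrak{p}$, this is equivalent to $\phi\inv(\sqrt{\mathfrak{o}'})\subseteq\bigcap_{\mathfrak{p}\in\mathtt{Prim}(R)}\mathfrak{p}=\sqrt{\mathfrak{o}}$. The reverse containment $\sqrt{\mathfrak{o}}\subseteq\phi\inv(\sqrt{\mathfrak{o}'})$ is automatic, since $S\subseteq\mathtt{Prim}(R)$ forces $\mathcal{K}_S\supseteq\sqrt{\mathfrak{o}}$; so density is in fact equivalent to the equality $\phi\inv(\sqrt{\mathfrak{o}'})=\sqrt{\mathfrak{o}}$. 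Thus the proposition reduces to the purely algebraic claim that $\phi\inv(\sqrt{\mathfrak{o}'})\subseteq\sqrt{\mathfrak{o}}$ if and only if $\mathtt{ker}(\phi)\subseteq\sqrt{\mathfrak{o}}$.

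The forward implication is immediate: since $0\in\sqrt{\mathfrak{o}'}$ we have $\mathtt{ker}(\phi)=\phi\inv(\{0\})\subseteq\phi\inv(\sqrt{\mathfrak{o}'})$, so $\phi\inv(\sqrt{\mathfrak{o}'})\subseteq\sqrt{\mathfrak{o}}$ gives $\mathtt{ker}(\phi)\subseteq\sqrt{\mathfrak{o}}$ at once. For the converse I would establish the identity $\phi\inv(\sqrt{\mathfrak{o}'})=\sqrt{\mathtt{ker}(\phi)}$, the radical of the kernel (the set of $r\in R$ some power of which lies in $\mathtt{ker}(\phi)$). Here I exploit that multiplication in a Krasner hyperring is single-valued, so $\phi(r^n)=\phi(r)^n$ and nilpotency is unambiguous: $r\in\phi\inv(\sqrt{\mathfrak{o}'})$ means $\phi(r)$ is nilpotent, i.e. $\phi(r^n)=\phi(r)^n=0$ for some $n$, i.e. $r^n\in\mathtt{ker}(\phi)$, i.e. $r\in\sqrt{\mathtt{ker}(\phi)}$. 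Granting this, if $\mathtt{ker}(\phi)\subseteq\sqrt{\mathfrak{o}}$ then, taking radicals and using that $\sqrt{\mathfrak{o}}$ is itself a radical hyperideal (so $\sqrt{\sqrt{\mathfrak{o}}}=\sqrt{\mathfrak{o}}$), we get $\phi\inv(\sqrt{\mathfrak{o}'})=\sqrt{\mathtt{ker}(\phi)}\subseteq\sqrt{\mathfrak{o}}$, which is exactly the density condition.

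I expect the main obstacle to be this last identity $\phi\inv(\sqrt{\mathfrak{o}'})=\sqrt{\mathtt{ker}(\phi)}$ together with the two facts it rests on: that the nil radical $\sqrt{\mathfrak{o}}$ of a hyperring is simultaneously its set of nilpotent elements and the intersection $\bigcap_{\mathfrak{p}\in\mathtt{Prim}(R)}\mathfrak{p}$ of its primitive hyperideals. The first is what powers the nilpotency computation, while the second is what makes $\mathcal{K}_S$ coincide with $\phi\inv(\sqrt{\mathfrak{o}'})$ and $\bigcap_{\mathfrak{p}}\mathfrak{p}$ coincide with $\sqrt{\mathfrak{o}}$. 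Checking that radicals of hyperideals behave classically — monotonicity and $\sqrt{\sqrt{\mathfrak{o}}}=\sqrt{\mathfrak{o}}$ — is routine since it involves only the single-valued multiplication, but it should be stated with care because the additive structure is multi-valued.
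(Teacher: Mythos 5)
Your setup---computing $\mathcal{K}_S=\phi\inv\bigl(\bigcap_{\mathfrak{q}\in\mathtt{Prim}(R')}\mathfrak{q}\bigr)$ and translating density into the containment $\mathcal{K}_S\subseteq\bigcap_{\mathfrak{p}\in\mathtt{Prim}(R)}\mathfrak{p}$---is sound and runs parallel to the paper's own argument, which establishes $\mathtt{Cl}(\phi^*(\mathtt{Prim}(R')))=\mathtt{Cl}(\mathtt{ker}(\phi))$ and then reads off the criterion; your forward implication is also fine. The genuine gap is in the converse, where you use two incompatible readings of $\sqrt{\mathfrak{o}'}$. To compute $\mathcal{K}_S$ you take $\sqrt{\mathfrak{o}'}=\bigcap_{\mathfrak{q}\in\mathtt{Prim}(R')}\mathfrak{q}$, a Jacobson-type radical; to prove $\phi\inv(\sqrt{\mathfrak{o}'})=\sqrt{\mathtt{ker}(\phi)}$ you take $\sqrt{\mathfrak{o}'}$ to be the set of nilpotent elements. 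These are different objects in general: the intersection of all primitive (hyper)ideals typically contains non-nilpotent elements. An ordinary ring is a Krasner hyperring, and for the localization map $\phi\colon\mathbb{Z}\to\mathbb{Z}_{(p)}$ one has $\mathtt{ker}(\phi)=0$ and no nonzero nilpotents anywhere, yet $\phi\inv\bigl(\bigcap_{\mathfrak{q}}\mathfrak{q}\bigr)=\phi\inv(p\mathbb{Z}_{(p)})=p\mathbb{Z}\nsubseteq 0=\bigcap_{\mathfrak{p}\in\mathtt{Prim}(\mathbb{Z})}\mathfrak{p}$, and indeed $\phi^*(\mathtt{Prim}(\mathbb{Z}_{(p)}))$ is the single closed, non-dense point $p\mathbb{Z}$. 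So the purely algebraic equivalence you reduce to is false for a general strong homomorphism, and no nilpotency computation can rescue it.

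What actually closes the converse (and what the paper's proof implicitly leans on, carried over from the preceding proposition) is surjectivity of $\phi$: in that case $\mathfrak{q}\mapsto\phi\inv(\mathfrak{q})$ identifies $\mathtt{Prim}(R')$ with the primitive hyperideals of $R$ containing $\mathtt{ker}(\phi)$, so that $\mathcal{K}_S=\bigcap\{\mathfrak{p}\in\mathtt{Prim}(R)\mid\mathfrak{p}\supseteq\mathtt{ker}(\phi)\}$; if $\mathtt{ker}(\phi)\subseteq\sqrt{\mathfrak{o}}=\bigcap_{\mathfrak{p}\in\mathtt{Prim}(R)}\mathfrak{p}$, this intersection runs over all of $\mathtt{Prim}(R)$ and equals $\sqrt{\mathfrak{o}}$, giving density. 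You should replace the identity $\phi\inv(\sqrt{\mathfrak{o}'})=\sqrt{\mathtt{ker}(\phi)}$ with this correspondence argument. A secondary caution: in a noncommutative hyperring the elementwise radical $\sqrt{\mathtt{ker}(\phi)}$ you invoke need not be a hyperideal at all, so the auxiliary facts you defer as routine (monotonicity, idempotence of the radical) are not available in the form you need them.
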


\begin{proof}
We first show that $\mathtt{Cl}(\phi^*(\mathtt{Cl}(\mathfrak{b})))=\mathtt{Cl}(\phi\inv(\mathfrak{b})),$ for all hyperideals $\mathfrak{b}$ of $R'.$ To this end, let $\mathfrak{s}\in \phi^*(\mathtt{Cl}(\mathfrak{b})).$ This implies $\phi(\mathfrak{s})\in \mathtt{Cl}(\mathfrak{b}),$ which means $\mathfrak{b}\subseteq \phi(\mathfrak{s}).$ In other words, $\mathfrak{s}\in \mathtt{Cl}(\phi\inv(\mathfrak{b})).$ The other inclusion follows from the fact that $\phi\inv(\mathtt{Cl}(\mathfrak{b}))=\mathtt{Cl}(\phi\inv(\mathfrak{b})).$ Since $$\mathtt{Cl}(\phi^*(\mathtt{Prim}(R')))=\phi^*(\mathtt{Cl}(\mathfrak{o}))=\mathtt{Cl}(\phi\inv(\mathfrak{o}))=\mathtt{Cl}(\mathtt{ker}(\phi)),$$ we see that $\mathtt{Cl}(\mathtt{ker}(\phi))$ is equal to $\mathtt{Prim}(R))$ if and only if $\mathtt{ker}(\phi)\subseteq \sqrt{\mathfrak{o}}.$ 
\end{proof}



\end{document}